\renewcommand{\p@enumii}{}
\def\@enum@{\list{\csname label\@enumctr\endcsname}%
{\usecounter{\@enumctr}\def\makelabel##1{
\normalfont\ignorespaces\emph{{##1}~}}
\setlength{\labelsep}{3pt}
\setlength{\parsep}{0pt}
\setlength{\itemsep}{0pt}
\setlength{\leftmargin}{0pt}
\setlength{\labelwidth}{0pt}
\setlength{\listparindent}{\parindent}
\setlength{\itemsep}{0pt}
\setlength{\itemindent}{0pt}
\topsep=3pt plus 1pt minus 1 pt}}
\renewcommand{\epsilon}{\ensuremath{\varepsilon}}
\renewcommand{\phi}{\ensuremath{\varphi}}
\renewcommand{\to}{\ensuremath{\longrightarrow}}
\renewcommand{\mapsto}{\ensuremath{\longmapsto}}
\newcommand{\N}{\ensuremath{\mathbb N}}
\newcommand{\Z}{\ensuremath{\mathbb Z}}
\newcommand{\sn}[1][n]{\ensuremath{S_{{#1}}}}
\renewcommand{\ker}[1]{\ensuremath{\operatorname{\text{Ker}}\left({#1}\right)}}
\newcommand{\aut}[1]{\ensuremath{\operatorname{\text{Aut}}\left({#1}\right)}}
\newcommand{\id}{\ensuremath{\operatorname{\text{Id}}}}
\newcommand{\lhra}{\lhook\joinrel\longrightarrow}
\def\@map#1#2[#3]{\mbox{$#1 \colon\thinspace #2 \to #3$}}
\def\map#1#2{\@ifnextchar [{\@map{#1}{#2}}{\@map{#1}{#2}[#2]}}
\newcommand{\brak}[1]{\ensuremath{\left\{ #1 \right\}}}
\newcommand{\ang}[1]{\ensuremath{\left\langle #1\right\rangle}}
\newcommand{\setangr}[2]{\ensuremath{\ang{#1 \,\left\lvert \, #2 \right.}}}
\newcommand{\ord}[1]{\ensuremath{\left\lvert #1\right\rvert}}
\newcommand{\setl}[2]{\ensuremath{\brak{\left. #1 \,\right\rvert \, #2}}}
\newtheoremstyle{theoremm}{}{}{\itshape}{}{\scshape}{.}{ }{}
\theoremstyle{theoremm}
\newtheorem{thm}{Theorem}
\newtheorem{lem}[thm]{Lemma}
\newtheorem{prop}[thm]{Proposition}
\newtheorem{cor}[thm]{Corollary}
\newtheoremstyle{remark}{}{}{}{}{\scshape}{.}{ }{}
\theoremstyle{remark}
\newtheorem{rem}[thm]{Remark}
\newtheorem{rems}[thm]{Remarks}
\newtheorem{exms}[thm]{Examples}
\newtheoremstyle{comment}{}{}{\bfseries}{}{\bfseries}{:}{ }{}
\theoremstyle{comment}
\newcommand{\reth}[1]{Theorem~\protect\ref{th:#1}}
\newcommand{\relem}[1]{Lemma~\protect\ref{lem:#1}}
\newcommand{\repr}[1]{Proposition~\protect\ref{prop:#1}}
\newcommand{\reco}[1]{Corollary~\protect\ref{cor:#1}}
\newcommand{\rexs}[1]{Examples~\protect\ref{ex:#1}}
\newcommand{\resec}[1]{Section~\protect\ref{sec:#1}}
\newcommand{\req}[1]{equation~(\protect\ref{eq:#1})}
\newcommand{\reqref}[1]{(\protect\ref{eq:#1})}
\begin{document}

\title{Embeddings of finite groups in $B_n/\Gamma_k(P_n)$ for $k=2,3$} 

\author{DACIBERG~LIMA~GON\c{C}ALVES\\
Departamento de Matem\'atica - IME-USP,\\
Rua~do~Mat\~ao~1010~CEP:~05508-090 - S\~ao Paulo - SP - Brazil.\\
e-mail:~\url{dlgoncal@ime.usp.br}\vspace*{4mm}\\
JOHN~GUASCHI\\
Normandie Univ., UNICAEN, CNRS,\\
Laboratoire de Math\'ematiques Nicolas Oresme UMR CNRS~\textup{6139},\\
CS 14032, 14032 Cedex Cedex 5, France.\\%
e-mail:~\url{john.guaschi@unicaen.fr}\vspace*{4mm}\\
OSCAR~OCAMPO~\\
Universidade Federal da Bahia, Departamento de Matem\'atica - IME,\\
Av. Adhemar de Barros~S/N~CEP:~40170-110 - Salvador - BA - Brazil.\\
e-mail:~\url{oscaro@ufba.br}
}


\maketitle

\begingroup
\renewcommand{\thefootnote}{}
\footnotetext{\hspace*{-7.3mm}
 2010 AMS Subject Classification: 20F36 Braid groups, Artin groups;
20F18 Nilpotent groups;
20F14 Derived series, central series, and generalizations;
20B35 Subgroups of symmetric groups;
16S34 Group rings.}
\endgroup

\begin{abstract}
\noindent
\emph{Let $n\geq 3$. 
In this paper, we study the problem of whether a given finite group $G$ embeds in a quotient of the form $B_{n}/\Gamma_{k}(P_{n})$, where $B_{n}$ is the  $n$-string Artin braid group, $k\in \brak{2,3}$, and $\brak{\Gamma_{l}(P_{n})}_{l\in \N}$ is the lower central series of the $n$-string pure braid group $P_{n}$. Previous results show that a necessary condition for such an embedding to exist is that $\ord{G}$ is odd (resp.\ is relatively prime with $6$) if $k=2$ (resp.\ $k=3$). We show that any finite group $G$ of odd order (resp.\ of order relatively prime with $6$) embeds in $B_{\ord{G}}/\Gamma_2(P_{\ord{G}})$ (resp.\ in $B_{\ord{G}}/\Gamma_3(P_{\ord{G}})$), where $\ord{G}$ denotes the order of $G$. The result in the case of $B_{\ord{G}}/\Gamma_2(P_{\ord{G}})$ has been proved independently by Beck and Marin. One may then ask whether $G$ embeds in a quotient of the form $B_{n}/\Gamma_{k}(P_{n})$, where $n<\ord{G}$ and $k\in \brak{2,3}$. If $G$ is of the form $\Z_{p^r}\rtimes_{\theta} \Z_d$, where the action $\theta$ is injective, $p$ is an odd prime (resp.\ $p\geq 5$ is prime) and $d$ is odd (resp.\ $d$ is relatively prime with $6$) and divides $p-1$, we show that $G$ embeds in $B_{p^r}/\Gamma_2(P_{p^r})$ (resp.\ in $B_{p^r}/\Gamma_3(P_{p^r})$). In the case $k=2$, this extends a result of Marin concerning the embedding of the Frobenius groups in $B_{n}/\Gamma_{2}(P_{n})$, and is a special case of another result of Beck and Marin. Finally, we construct an explicit embedding in $B_9/\Gamma_2(P_9)$ of the two non-Abelian groups of order $27$, namely the semi-direct product $\Z_9\rtimes \Z_3$, where the action is given by multiplication by $4$, and the Heisenberg group mod $3$.}
\end{abstract}

\section{Introduction}\label{sec:intro}

If $n\in \N$, let $B_{n}$ denote the \emph{(Artin) braid group} on $n$ strings. It is well known that $B_{n}$ admits a presentation with generators $\sigma_{1},\ldots,\sigma_{n-1}$ that are subject to the relations $\sigma_{i}\sigma_{j}=\sigma_{j}\sigma_{i}$ for all $1\leq i<j\leq n-1$ for which $\lvert i-j \rvert\geq 2$, and $\sigma_{i}\sigma_{i+1}\sigma_{i}=\sigma_{i+1}\sigma_{i}\sigma_{i+1}$ for all $1\leq i\leq n-2$. Let $\map{\sigma}{B_{n}}[\sn]$ denote the surjective homomorphism onto the symmetric group $\sn$ defined by $\sigma(\sigma_{i})=(i,i+1)$ for all $1\leq i\leq n-1$. The \emph{pure braid group} $P_{n}$ on $n$ strings is defined to be the kernel of $\sigma$, from which we obtain the following short exact sequence:
\begin{equation}\label{eq:sespn}
1 \to P_n \to  B_n \stackrel{\sigma}{\to} \sn \to 1. 
\end{equation}
If $G$ is a group, recall that its \emph{lower central series} $\brak{\Gamma_{k}(G)}_{k\in \N}$ is defined by $\Gamma_{1}(G)=G$, and $\Gamma_{k}(G)=[\Gamma_{k-1}(G),G]$ for all $k\geq 2$ (if $H$ and $K$ are subgroups of $G$, $[H,K]$ is defined to be the subgroup of $G$ generated by the commutators of the form $[h,k]=hkh^{-1}k^{-1}$, where $h\in H$ and $k\in K$). Note that $\Gamma_{2}(G)$ is the commutator subgroup of $G$, and that $\Gamma_{k}(G)$ is a normal subgroup of $G$ for all $k\in \N$. In our setting, since $P_{n}$ is normal in $B_{n}$, it follows that $\Gamma_{k}(P_{n})$ is also normal in $B_{n}$, and the extension~\reqref{sespn} induces the following short exact sequence:
\begin{equation}\label{eq:sesgammak}
1 \to P_n/\Gamma_k(P_n) \to  B_n/\Gamma_k(P_n) \stackrel{\overline{\sigma}}{\to} \sn \to 1,
\end{equation}
obtained by taking the quotient of $P_n$ and $B_{n}$ by $\Gamma_k(P_n)$. It follows from results of Falk and Randell~\cite{FRinv} and Kohno~\cite{Ko} that the kernel of~\reqref{sesgammak} is torsion free (see \repr{tf} for more information). The quotient groups of the form $B_{n}/\Gamma_{k}(P_{n})$ have been the focus of several recent papers. First, the quotient $B_n/\Gamma_{2}(P_{n})$ belongs to a family of groups known as \emph{enhanced symmetric groups}~\cite[page~201]{Marin2} that were analysed in~\cite{Tits}. Secondly, in their study of pseudo-symmetric braided categories, Panaite and Staic showed that this quotient is isomorphic to the quotient of $B_n$ by the normal closure of the set $\setl{\sigma_i\sigma_{i+1}^{-1}\sigma_i \sigma_{i+1}^{-1} \sigma_i \sigma_{i+1}^{-1}}{i=1,2,\ldots,n-2}$~\cite{PS}. 
Thirdly, in~\cite{GGO1}, we showed that $B_n/\Gamma_2(P_n)$ is a crystallographic group, and that up to isomorphism, its finite Abelian subgroups are the Abelian subgroups of $\sn$ of odd order. In particular, the torsion of $B_n/\Gamma_2(P_n)$ is the odd torsion of $\sn$. We also gave an explicit embedding in $B_7/\Gamma_2(P_7)$ of the Frobenius group $\Z_7\rtimes \Z_3$ of order $21$, which is the smallest finite non-Abelian group of odd order. As far as we know, this is the first example of a finite non-Abelian group that embeds in a quotient of the form $B_n/\Gamma_2(P_n)$.
Almost all of the results of~\cite{GGO1} were subsequently extended to the generalised braid groups associated to an arbitrary complex reflection group by Marin~\cite{Ma}. If $p>3$ is a prime number for which $p\equiv 3\bmod{4}$, he showed that the Frobenius group $\Z_p\rtimes \Z_{(p-1)/2}$ embeds in $B_p/\Gamma_2(P_p)$. 
Observe that this group cannot be embedded in $B_n/\Gamma_2(P_n)$ for any $n<p$, since $\Z_p$ cannot be embedded in $\sn$ in this case. In another direction, the authors studied some aspects of the quotient $B_n/\Gamma_k(P_n)$ for all $n,k\geq3$, and proved that it is an almost-crystallographic group~\cite{GGO2}. For the case $k=3$, it was shown that the torsion of $B_n/\Gamma_3(P_n)$ is the torsion of $\sn$ that is relatively prime with $6$. 
For future reference, we summarise some of these results in the following theorem.


\begin{thm}[{\cite[Corollary~4]{GGO1}, \cite[Theorems~2 and~3]{GGO2}}]\label{th:torsionGamma23}
Let $n\geq 3$.
\begin{enumerate}
\item\label{it:torsionGamma23a} The torsion of the quotient $B_n/\Gamma_2(P_n)$ is equal to the odd torsion of $\sn$.
\item\label{it:torsionGamma23b} The group $B_n/\Gamma_3(P_n)$ has no elements of order $2$ or $3$, and if $m\in \N$ is relatively prime with $6$ then $B_n/\Gamma_3(P_n)$ possesses elements of order $m$ if and only if $\sn$ does. 
\end{enumerate}
\end{thm}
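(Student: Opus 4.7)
The backbone of both parts is the short exact sequence~\reqref{sesgammak} together with the torsion-freeness of $P_n/\Gamma_k(P_n)$ recorded in \repr{tf}. This forces any $g\in B_n/\Gamma_k(P_n)$ of finite order $m$ to project to an element of order exactly $m$ in $\sn$, since otherwise a suitable power of $g$ would yield non-trivial torsion in the kernel. Hence the torsion orders of $B_n/\Gamma_k(P_n)$ are contained in those of $\sn$, and both statements reduce to the question of deciding, for a given cyclic subgroup $\ang{\rho}\subseteq\sn$, whether the pulled-back extension admits a section.

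For part~(\ref{it:torsionGamma23a}), the kernel $P_n/\Gamma_2(P_n)$ is free Abelian on the classes of the standard generators $A_{i,j}$, and $\sn$ permutes this basis through its natural action on pairs. To rule out even torsion it suffices to treat a transposition $\tau=(i,j)$: an arbitrary lift can be written as $\widetilde{\tau}\cdot p$ with $p$ in the kernel, and a direct computation (additively) gives $(\widetilde{\tau}\, p)^{2}=A_{i,j}+(1+\tau_{\ast})p$; projection to the $\tau_{\ast}$-fixed summand containing $A_{i,j}$ reduces this to $A_{i,j}+2\bar{p}$, which never vanishes in~$\Z$. For $\rho\in\sn$ of odd order $m$, a lift $\widetilde{\rho}$ with $\widetilde{\rho}^{m}=v$ in the kernel admits an order-$m$ modification $\widetilde{\rho}\cdot p$ exactly when $N_{\rho}\cdot p=-v$, where $N_{\rho}=1+\rho_{\ast}+\cdots+\rho_{\ast}^{m-1}$; an orbit analysis shows that $v$ is always an integer combination of $\ang{\rho}$-orbit sums on $\brak{A_{i,j}}$, and that each such orbit sum lies in the image of $N_{\rho}$, the odd-order hypothesis guaranteeing that the corresponding divisibilities are by divisors of $m$ rather than by~$2$.

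Part~(\ref{it:torsionGamma23b}) follows the same scheme applied to the central extension $1\to \Gamma_2(P_n)/\Gamma_3(P_n)\to P_n/\Gamma_3(P_n)\to P_n/\Gamma_2(P_n)\to 1$, whose two graded pieces are both torsion-free $\sn$-modules by Falk--Randell and Kohno. The non-existence of elements of order $2$ is formally identical to the preceding step. To rule out elements of order $3$, one first adjusts any lift of a $3$-cycle $\rho$ by an Abelian factor so as to cancel the bottom-layer obstruction treated in part~(\ref{it:torsionGamma23a}); the remaining cube then lies in $\Gamma_2(P_n)/\Gamma_3(P_n)$, and a commutator computation shows that its $\ang{\rho}$-invariant component is not $3$-divisible and hence cannot be killed by a further modification. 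Conversely, for $m$ coprime to $6$ a two-step averaging argument --- first in the Abelianisation and then in $\Gamma_2(P_n)/\Gamma_3(P_n)$ --- yields a lift of order $m$, the coprimality of $m$ with $2$ and $3$ ensuring that both norm equations are soluble integrally. The main obstacle is the precise identification of the $3$-cycle obstruction class inside $\Gamma_2(P_n)/\Gamma_3(P_n)$: this requires unwinding the nilpotent structure in a suitable commutator basis in order to exhibit an explicit coefficient that is not divisible by~$3$.
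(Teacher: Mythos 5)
The paper does not prove this theorem: it is quoted from \cite{GGO1} (Corollary~4) and \cite{GGO2} (Theorems~2 and~3), so your sketch has to stand on its own, and as it stands it has genuine gaps. First, for the even-torsion exclusion in part~(a), it does \emph{not} suffice to treat a single transposition: an element of order $2$ in $B_n/\Gamma_2(P_n)$ projects to an arbitrary involution of $\sn$, i.e.\ a product of several disjoint transpositions; your coefficient argument does extend to that case, but the reduction as stated is wrong. More seriously, in the positive direction your key assertion that ``each orbit sum lies in the image of $N_{\rho}$'' is false whenever the $\ang{\rho}$-orbit on pairs is not free: if the stabiliser of a pair has order $m/|\mathcal{O}|>1$, then the image of $N_{\rho}$ on $\Z[\mathcal{O}]$ consists only of multiples of $(m/|\mathcal{O}|)$ times the orbit sum. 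Such non-free orbits occur as soon as $\rho$ has fixed points or several cycles (e.g.\ a pair of fixed points, or a pair meeting two cycles of different lengths). The actual proof requires choosing a specific lift --- after conjugating so the cycles occupy consecutive blocks, take the product of the standard block braids, whose $m$-th power is a product of block full twists appearing with multiplicity $m/d_i$ --- and then checking that these multiplicities match exactly the divisibility forced by the stabilisers, so that the norm equation $N_{\rho}p=-v$ is integrally soluble. Your ``odd order guarantees divisibility by divisors of $m$ rather than $2$'' does not substitute for this verification.

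Second, the heart of part~(b) --- that $B_n/\Gamma_3(P_n)$ has no elements of order $3$ --- is not proved in your proposal at all: you yourself flag ``the precise identification of the $3$-cycle obstruction class'' as the main obstacle and leave it open, whereas this computation (showing that for \emph{every} lift, including after any abelian-layer correction, the $\ang{\rho}$-invariant component of the resulting class in $\Gamma_2(P_n)/\Gamma_3(P_n)$ is not killable) is precisely the content of \cite[Theorem~2]{GGO2}. Note also that an order-$3$ element of $\sn$ is a product of disjoint $3$-cycles, not a single $3$-cycle, so the obstruction analysis must be carried out in that generality, and the two-layer argument is a non-abelian lifting problem, not simply two independent norm equations. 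Similarly, the existence of elements of order $m$ with $\gcd(m,6)=1$ requires redoing the orbit/stabiliser analysis on the basis $\mathcal{B}'$ (where the action involves signs, cf.\ \relem{freeacjoin}(\ref{it:freeacjoinb})); your ``two-step averaging'' is only a plan. The absence of $2$-torsion in $B_n/\Gamma_3(P_n)$, by contrast, follows at once from part~(a) and \repr{tf}(\ref{it:ptorsionb}), which is the cheap way to say what you call ``formally identical''.
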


Almost nothing is known about the torsion and the finite subgroups of $B_n/\Gamma_k(P_n)$ in the case where $k>3$.

Suppose that $n\geq 3$ and $k\geq 2$. The results of~\cite{GGO1,GGO2,Ma} lead to a number of interesting problems involving the quotients $B_n/\Gamma_k(P_n)$. Given a finite group $G$, a natural question in our setting is whether it can be embedded in some $B_n/\Gamma_k(P_n)$. In order to formulate some of these problems, we introduce the following notation. Let $\ord{G}$ denote the  order of $G$, let $m(G)$ denote the least positive integer $r$ for which $G$ embeds in the symmetric group $\sn[r]$, and if $k\geq 2$, let $\ell_k(G)$ denote the least positive integer $s$, if such an integer exists, for which $G$ embeds in the group $B_{s}/\Gamma_k(P_{s})$. The integer $\ell_k(G)$ is not always defined. For example, if $G$ is of even order, \reth{torsionGamma23}(\ref{it:torsionGamma23a}) implies that $G$ does not embed in any group of the form $B_{n}/\Gamma_k(P_{n})$. However, if $\ell_k(G)$ is defined, then $m(G) \leq \ell_k(G)$ using~\reqref{sesgammak} and the fact that $P_n/\Gamma_k(P_n)$ is torsion free.

The main aim of this paper is to study the embedding of finite groups in the two quotients $B_n/\Gamma_k(P_n)$, where $k\in \brak{2,3}$. In \resec{split}, we start by recalling some results from~\cite{GGO1,GGO2} about the action of $\sn$ on certain bases of the free Abelian groups $P_{n}/\Gamma_{2}(P_{n})$ and $\Gamma_{2}(P_{n})/\Gamma_{3}(P_{n})$, which we use to obtain information about the cycle structure of elements of $\sn$ that fix elements of these bases. In \repr{null}, using cohomological arguments, we show that a short exact sequence splits if its quotient is a finite group $G$ and its kernel is a free $\Z[G]$-module. This provides a fundamental tool for embedding $G$ in our quotients. In \resec{cayley}, we prove the following result.

\begin{thm}\label{th:cayley23}
Let $G$ be a finite group, and let $k\in \brak{2,3}$. Then the group $G$ embeds in $B_{\ord{G}}/\Gamma_k(P_{\ord{G}})$ if and only if $\gcd(\ord{G},k!)=1$.
\end{thm}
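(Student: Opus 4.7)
The only-if direction is immediate from \reth{torsionGamma23}: an embedding of $G$ into $B_{\ord{G}}/\Gamma_{k}(P_{\ord{G}})$ would force every element of $G$ to have order that is odd (when $k=2$) or coprime with $6$ (when $k=3$), and Cauchy's theorem then yields $\gcd(\ord{G},k!)=1$.

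For the converse, set $n=\ord{G}$ and use Cayley's theorem to embed $G\hookrightarrow\sn$ via the left regular representation. Pulling back the extension~\reqref{sesgammak} along this inclusion produces
\begin{equation*}
1\to P_n/\Gamma_k(P_n)\to E\to G\to 1,
\end{equation*}
and exhibiting $G$ inside $B_n/\Gamma_k(P_n)$ is equivalent to splitting this sequence. By \repr{null} it suffices to exhibit the kernel (for $k=2$), or appropriate abelian subquotients of it (for $k=3$), as free $\Z[G]$-modules, which we plan to deduce from a free action of $G$ on a combinatorial basis.

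For $k=2$, recall that $P_n/\Gamma_2(P_n)$ is the free abelian group on the Artin basis $\brak{A_{i,j}}_{1\le i<j\le n}$, with $\sn$ acting by permutation on unordered pairs of $\brak{1,\ldots,n}$. Under the regular action of $G$, an element stabilising a pair either fixes both points (hence is trivial by regularity) or swaps them (hence squares to the identity, and so is trivial because $\ord{G}$ is odd). Thus $G$ acts freely on the basis, $P_n/\Gamma_2(P_n)$ is a free $\Z[G]$-permutation module, and \repr{null} delivers the required section.

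For $k=3$, proceed in two stages. The $k=2$ argument (valid since $\gcd(\ord{G},2)=1$) provides a section $s_2\colon G\to \tilde{E}$, where $\tilde{E}$ denotes the preimage of $G$ in $B_n/\Gamma_2(P_n)$. Since $[\Gamma_2(P_n),P_n]\subseteq \Gamma_3(P_n)$, the subgroup $\Gamma_2(P_n)/\Gamma_3(P_n)$ is central in $P_n/\Gamma_3(P_n)$, so the extension $1\to \Gamma_2(P_n)/\Gamma_3(P_n)\to E\to \tilde{E}\to 1$ is abelian; pulling it back along $s_2$ yields an abelian extension of $G$ by $\Gamma_2(P_n)/\Gamma_3(P_n)$, whose splitting composes with the natural map to give a section $G\to E$. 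From the description of the $\sn$-module structure recalled in \resec{split}, $\Gamma_2(P_n)/\Gamma_3(P_n)$ decomposes as a direct sum, indexed by the $3$-element subsets $T\subseteq \brak{1,\ldots,n}$, of copies of a fixed finite-rank $\Z$-module carrying the action of the setwise stabiliser of $T$ in $\sn$. A non-trivial $g\in G$ preserving $T$ must act on $T$ without fixed points (by regularity), so its restriction $g|_T$ is a $3$-cycle and the order of $g$ is divisible by $3$; this contradicts $\gcd(\ord{G},3)=1$. Hence the $G$-action on $3$-subsets is free, each summand is induced from the trivial subgroup, and $\Gamma_2(P_n)/\Gamma_3(P_n)$ is a free $\Z[G]$-module. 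A second application of \repr{null} completes the proof. The main obstacle is precisely this second stage: identifying the $\Z[\sn]$-module structure of $\Gamma_2(P_n)/\Gamma_3(P_n)$ in a form to which the stabiliser analysis and Proposition~\ref{prop:null} can be applied in tandem.
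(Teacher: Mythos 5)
Your proposal is correct and follows essentially the same route as the paper: the paper proves Theorem~\ref{th:cayley23} by feeding the fixed-point-free Cayley embedding into Proposition~\ref{prop:embedgen}, whose proof is exactly your argument (pull back~\reqref{sesgammak}, show $P_n/\Gamma_2(P_n)$, and then $\Gamma_2(P_n)/\Gamma_3(P_n)$ in a second stage, are free $\Z[G]$-modules via the free action on the bases $\mathcal{B}$ and $\widehat{\mathcal{B}}'$ of~\reqref{defBBprime}, and split with Proposition~\ref{prop:null}). Your only deviation is cosmetic: you verify triviality of the stabilisers of the $3$-subsets directly from regularity and $\gcd(\ord{G},3)=1$, where the paper invokes Lemma~\ref{lem:freeacjoin} together with the ``at most $k-1$ fixed points'' hypothesis, which amounts to the same thing for the regular representation.
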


The statement of \reth{cayley23} has been proved independently by Beck and Marin in the case $k=2$~\cite{MaV} using different methods within the setting of real reflection groups. This result may be viewed as a Cayley-type result for $B_n/\Gamma_k(P_n)$ since the proof makes use of the embedding of $G$ in the symmetric group $\sn[\lvert G\rvert]$, as well as \repr{embedgen} that provides sufficient conditions on the fixed points in the image of an embedding of $G$ in $\sn[m]$ for $G$ to embed in $B_n/\Gamma_k(P_n)$. If $\gcd(\ord{G},k!)=1$, it follows from this theorem that $\ell_k(G)\leq \lvert G \lvert$ by \reth{cayley23}, from which we obtain:
\begin{equation}\label{eq:ineqmG}
m(G) \leq \ell_k(G)\leq \lvert G\lvert.
\end{equation}
The analysis of the inequalities of~\reqref{ineqmG} is itself an interesting problem. Using \reth{torsionGamma23}, if $G$ is a cyclic group of prime order at least $5$ and $k$ is equal to either $2$ or $3$ then $m(G)=\ell_k(G)=\lvert G\rvert$. In~\cite[Corollary~13]{MaV}, Beck and Marin show that $m(G)=\ell_2(G)$ for any finite group of odd order in a broader setting. This result may also be obtained by applying~\cite[Corollary~7]{MaV} to~\cite[Corollary~4 and its proof]{GGO1}. We do not currently know whether there exist groups for which $m(G) < \ell_3(G)$.

In \resec{semidirect}, we study the embedding of certain finite groups in $B_n/\Gamma_k(P_n)$, where $k\in \brak{2,3}$. In the case $k=2$, our results are special cases of~\cite[Corollary~13]{MaV}, but the methods that we use are rather different from those of~\cite{MaV}, and they are also valid for the case $k=3$. In \resec{proofmain}, we consider certain semi-direct products of the form $\Z_n\rtimes_{\theta} \Z_m$ for which the action $\theta$ is injective, and we analyse their possible embedding in $B_n/\Gamma_k(P_n)$. Our main result in this direction is the following. 



\begin{thm}\label{th:main}  
Let $m,n\geq 3$, let $G=\Z_n\rtimes_{\theta} \Z_m$, where $\map{\theta}{\Z_{m}}[\Z_{n}]$ is the associated action, and let $1\leq t<n$ be such that $\theta(1_m)$ is multiplication by $t$ in $\Z_{n}$. Assume that $\gcd(t^l-1,n)=1$ for all $1\leq l \leq m-1$. If $mn$ is odd (resp.\ $\gcd(mn,6)=1$) then $G$ embeds in $B_n/\Gamma_2(P_n)$ (resp.\ in $B_n/\Gamma_3(P_n)$).
\end{thm}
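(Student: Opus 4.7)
The plan is to exploit the natural faithful affine action of $G$ on $\Z_{n}$ to produce an embedding $\rho \colon G \hookrightarrow \sn$, and then to apply the embedding criterion (\repr{embedgen}) developed in \resec{split} in order to lift $\rho$ to $B_{n}/\Gamma_{k}(P_{n})$. Concretely, I would let $(a,b) \in \Z_{n} \rtimes_{\theta} \Z_{m}$ act on $x \in \Z_{n}$ by $x \mapsto t^{b} x + a$; the defining relation of the semi-direct product makes this a left action on $\brak{0,1,\ldots,n-1}$, and injectivity of $\rho$ follows from evaluating at $x=0$ (which forces $a=0$) and then from the hypothesis $\gcd(t^{b}-1,n)=1$ for $1\leq b \leq m-1$ (which forces $b=0$).

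The first key observation is a fixed-point count. For a non-trivial element $(a,b)$ of $G$, the fixed-point equation $(t^{b}-1) x \equiv -a \pmod{n}$ has no solution when $b=0$ (since then $a\neq 0$) and exactly one solution when $b\neq 0$ (since $t^{b}-1$ is invertible modulo $n$). Therefore every non-trivial element of $\rho(G)$, viewed as a permutation of $\brak{0,1,\ldots,n-1}$, has at most one fixed point.

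Next I would verify freeness of the pertinent $\Z[G]$-modules via $\rho$. The group $P_{n}/\Gamma_{2}(P_{n})$ is free abelian on generators $A_{i,j}$ indexed by unordered pairs, permuted by $\sn$ in the natural way; a non-trivial stabiliser in $\rho(G)$ of a pair $\brak{i,j}$ would either fix both indices (impossible by the fixed-point bound) or exchange them (forcing an element of order dividing $2$, which is ruled out since $mn$ is odd). Hence every orbit of $\rho(G)$ on the pair basis is free, so $P_{n}/\Gamma_{2}(P_{n})$ is a free $\Z[G]$-module. For $k=3$, the analogous argument applies to the basis of $\Gamma_{2}(P_{n})/\Gamma_{3}(P_{n})$ that is singled out in \resec{split}, whose elements are indexed by triples of distinct indices: a non-trivial stabiliser would induce on the underlying triple either the identity (three fixed points, ruled out), a transposition (forcing an element of order $2$, ruled out by $\gcd(mn,2)=1$), or a $3$-cycle (forcing an element of order $3$, ruled out by $\gcd(mn,3)=1$). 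Hence $\Gamma_{2}(P_{n})/\Gamma_{3}(P_{n})$ is again a free $\Z[G]$-module.

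Finally, pulling the extension \reqref{sesgammak} back along $\rho$ yields an extension of $G$ whose kernel $P_{n}/\Gamma_{k}(P_{n})$ is, for $k=2$, a free $\Z[G]$-module, and for $k=3$, an extension of two free $\Z[G]$-modules. In either case \repr{null} (applied directly when $k=2$, and twice in sequence via the filtration $1 \to \Gamma_{2}(P_{n})/\Gamma_{3}(P_{n}) \to P_{n}/\Gamma_{3}(P_{n}) \to P_{n}/\Gamma_{2}(P_{n}) \to 1$ when $k=3$) gives vanishing of the relevant $H^{2}(G,-)$, so the pulled-back extension splits and $G$ embeds in $B_{n}/\Gamma_{k}(P_{n})$. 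The main obstacle is the $k=3$ step: one must invoke the explicit description from \resec{split} of how $\sn$ acts on the chosen basis of $\Gamma_{2}(P_{n})/\Gamma_{3}(P_{n})$ in order to argue rigorously that a stabiliser of a basis element really does induce on the underlying unordered triple of indices only the three permutation types listed above, at which point the absence of $2$- and $3$-torsion in $G$ closes the argument.
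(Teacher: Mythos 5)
Your proposal is correct and follows essentially the same route as the paper: the affine embedding $(a,b)\mapsto (x\mapsto t^{b}x+a)$ with the at-most-one-fixed-point count is exactly \relem{Ginjphi} and \relem{fundamental}, and the orbit/freeness argument plus the splitting via \repr{null} (done in two stages through $\Gamma_{2}(P_{n})/\Gamma_{3}(P_{n})$ for $k=3$) is precisely the content of \repr{embedgen}, which the paper invokes to conclude. The only points to tidy are that an element exchanging $i$ and $j$ has \emph{even} order (not order dividing $2$), and that for $k=3$ the stabiliser condition must be taken up to inversion, since the $\sn$-action sends $\alpha_{i,j,k}$ into $\mathcal{B}'\cup\mathcal{B}'^{-1}$ — both handled by \relem{freeacjoin}, as you anticipated.
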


Using \relem{fundamental}(\ref{it:funda}), we remark that the hypotheses of \reth{main} imply that the action $\map{\theta}{\Z_m}[\aut{\Z_{n}}]$ is injective. As an application of this theorem, we obtain the following corollary.


\begin{cor}\label{cor:app}
Let $p$ be an odd prime, let $p-1=2^jd$, where $d$ is odd, let $d_{1}$ be a divisor of $d$, and let $G$ be a group of the form $\Z_{p^r}\rtimes_{\theta} \Z_{d_1}$, 
where $\map{\theta}{\Z_{d_1}}[\aut{\Z_{p^r}}]$ is injective.
\begin{enumerate}[(a)]
\item\label{it:appa} If $p\geq 3$ then
$G$ embeds in $B_{p^r}/\Gamma_2(P_{p^r})$.
\item\label{it:appb} If $p\geq 5$ and $d_1$ satisfies $\gcd(d_1,3)=1$ then $G$ 
embeds in $B_{p^r}/\Gamma_3(P_{p^r})$.
\end{enumerate}
\end{cor}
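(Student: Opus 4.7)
The natural plan is to deduce the corollary directly from \reth{main} with $n = p^r$ and $m = d_1$, then verify its hypotheses: that $m, n \geq 3$; the coprimality condition $\gcd(t^l - 1, n) = 1$ for $1 \leq l \leq m - 1$; and the parity/arithmetic condition on $mn$. A small nuance is that $d_1$ could equal $1$, in which case $G$ is simply cyclic of order $p^r$; this degenerate case is not directly covered since \reth{main} requires $m \geq 3$, but it follows at once from \reth{cayley23} applied to $G = \Z_{p^r}$, whose order is coprime to $2!$ in part~(a), and to $3!$ under the hypothesis $p \geq 5$ of part~(b). Since $d_1$ divides the odd integer $d$, the only remaining case is $d_1 \geq 3$.

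For $d_1 \geq 3$, let $1 \leq t < p^r$ be such that $\theta(1_{d_1})$ is multiplication by $t$ in $\Z_{p^r}$. Injectivity of $\theta$ forces the order of $t$ in $\aut{\Z_{p^r}}$ to be exactly $d_1$. The key technical step is then the coprimality condition: for each $1 \leq l \leq d_1 - 1$, one needs $\gcd(t^l - 1, p^r) = 1$, which, since $p^r$ is a prime power, reduces to showing $t^l \not\equiv 1 \pmod{p}$. To see this, note that reduction modulo $p$ induces a surjection $\aut{\Z_{p^r}} \to \aut{\Z_p}$ whose kernel has order $p^{r-1}$. As $d_1$ divides $p - 1$, it is coprime to $p$, so the cyclic subgroup $\langle t \rangle$ of order $d_1$ meets this kernel trivially, whence the image of $t$ in $\aut{\Z_p}$ still has order $d_1$. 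Consequently $t^l \not\equiv 1 \pmod{p}$ for $1 \leq l \leq d_1 - 1$, as required.

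The remaining arithmetic checks are routine: for part~(a), $mn = d_1 p^r$ is odd because $d_1$ divides the odd integer $d$ and $p$ is odd; for part~(b), $\gcd(mn, 6) = 1$ follows from $d_1$ being odd, $\gcd(d_1, 3) = 1$ by hypothesis, and $\gcd(p^r, 6) = 1$ since $p \geq 5$. \reth{main} then yields the desired embeddings in both cases. I expect the only genuinely nontrivial step to be the coprimality verification above, where the efficient tool is to pass to the residue field $\Z/p\Z$ and exploit the cyclicity of $\aut{\Z_{p^r}}$; the rest amounts to bookkeeping with the arithmetic hypotheses.
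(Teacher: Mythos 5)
Your proof is correct and follows essentially the same route as the paper: both deduce \reco{app} from \reth{main}, the key step being the verification that $\gcd(t^{l}-1,p^{r})=1$ for $1\leq l\leq d_{1}-1$, which you establish via the reduction map $\aut{\Z_{p^{r}}}\to\aut{\Z_{p}}$ with kernel of order $p^{r-1}$, while the paper invokes Zassenhaus's observation that a unit congruent to $1$ modulo $p$ has $p$-power order in $\Z_{p^r}^{\ast}$ — the same idea in a slightly different guise. Your separate treatment of the degenerate case $d_{1}=1$ via \reth{cayley23} is a small extra care (the paper applies \reth{main} directly, whose statement assumes $m\geq 3$) and does not change the substance.
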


Since the group $\Z_{p^{r}}$ cannot be embedded in $\sn[m]$ for any $m<p^{r}$, the groups of \reco{app} satisfy $m(G)=\ell_{k}(G)=p^{r}$, where $k\in \brak{2,3}$, so the results of this corollary are sharp in this sense, and are coherent with those of~\cite[Corollary~13]{MaV} in the case $k=2$. Further, the groups that appear in~\cite[Corollary~3.11]{Ma} correspond to the case of where $r=1$, $p\equiv 3\bmod{4}$, and $d_1=(p-1)/2$ is odd. Hence \reco{app} generalises Marin's result to the case where $p$ is any odd prime and $d_{1}$ is the greatest odd divisor of $p-1$, and more generally, in the case $k=2$, the family of groups obtained in \reth{main} extends even further that of the Frobenius groups of~\cite[Corollary 3.11]{Ma}.

At the end of the paper, in \resec{further}, we give explicit embeddings of the two non-Abelian groups of order $27$ in $B_{9}/\Gamma_{2}(P_{9})$. Neither of these groups satisfies the hypotheses of \reth{main}.
The fact that they embed in $B_{9}/\Gamma_{2}(P_{9})$  follows from the more general result of~\cite[Corollary~13]{MaV}, but our approach is different to that of~\cite{MaV}. Within our framework, it is natural to study these two groups, first because with the exception of the Frobenius group of order $21$ analysed in~\cite{GGO1}, they are the smallest non-Abelian groups of odd order, and secondly because they are of order $27$, so are related to the discussion in \resec{proofmain} on groups whose order is a prime power. The direct embedding of these groups in $B_{9}/\Gamma_{2}(P_{9})$ is computationally difficult due to the fact that the kernel $P_{9}/\Gamma_{2}(P_{9})$ of~\reqref{sesgammak} is of rank $36$, but we get round this problem by first considering an embedding in a quotient where the corresponding kernel is free Abelian of rank $9$, and then by applying \repr{null}. We believe that this technique will prove to be useful for other groups.

  If $n\geq 3$, it follows from~\cite[Corollary~13]{MaV} that the isomorphism classes of the  finite subgroups of $B_n/\Gamma_2(P_n)$ are in bijection with those of the subgroups of $\sn$ of odd order. The study of the finite non-cyclic subgroups of $B_n/\Gamma_3(P_n)$ constitutes work in progress.

\subsection*{Acknowledgements}

This work took place during several visits to the Departamento de Matem\'atica, Universidade Federal de Bahia, the Departamento de Matem\'atica do IME~--~Universidade de S\~ao Paulo and to the Laboratoire de Math\'ematiques Nicolas Oresme UMR CNRS~6139, Universit\'e de Caen Normandie. The first  and the third authors were  partially supported by  the FAPESP Projeto Tem\'atico ``Topologia Alg\'ebrica, Geom\'etrica e Diferencial'' 2016/24707-4  (Brazil), 
and the first two authors were partially supported by the CNRS/FAPESP PRC project n\textsuperscript{o}~275209 (France) and n\textsuperscript{o}~2016/50354-1 (Brazil). The third author was initially supported by 
a project grant n\textsuperscript{o}~151161/2013-5 from CNPq. 
We would also like to thank Vincent Beck and Ivan Marin for sharing their results with us. 

\section{Preliminaries}\label{sec:split} 


In this section, we recall several results concerning the torsion of the groups
$B_n/\Gamma_k(P_n)$, where $k\in \brak{2,3}$, as well as some group-cohomological facts from~\cite{Br} that will be used in this paper. We first state the following result from~\cite{GGO2} that we will require. 
 
\begin{prop}~\cite[Lemma~11]{GGO2} \label{prop:tf}
\begin{enumerate}
\item\label{it:ptorsiona} Let
$n,k\geq 2$. Then the group $P_n/{\Gamma_k(P_n)}$ is torsion free.
\item\label{it:ptorsionb} Let $n\geq 3$, let $k\geq l\geq 1$, and let $G$ be a finite group. If $B_n/\Gamma_k(P_n)$ possesses a (normal) subgroup isomorphic to $G$ then $B_n/\Gamma_l(P_n)$ possesses a (normal) subgroup isomorphic to $G$. In particular, if $p$ is prime, and if $B_n/\Gamma_l(P_n)$ has no $p$-torsion then $B_n/\Gamma_k(P_n)$ has no $p$-torsion.
\end{enumerate}
\end{prop}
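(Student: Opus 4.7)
The plan is to prove (a) by induction on $k$, and then to deduce (b) by a short diagram chase.

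For the base case of (a), when $k=2$, the group $P_n/\Gamma_2(P_n)$ is the abelianisation of $P_n$, which is classically known to be free abelian of rank $\binom{n}{2}$, generated by the images of the standard Artin generators. For the inductive step ($k\geq 3$), I would invoke the central extension
\begin{equation*}
1 \to \Gamma_{k-1}(P_n)/\Gamma_k(P_n) \to P_n/\Gamma_k(P_n) \to P_n/\Gamma_{k-1}(P_n) \to 1,
\end{equation*}
whose kernel lies in the centre of the middle group by the very definition of the lower central series. The results of Falk--Randell and Kohno mentioned in the introduction ensure that the graded piece $\Gamma_{k-1}(P_n)/\Gamma_k(P_n)$ is free abelian. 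Any torsion element of the middle term projects to a torsion element of $P_n/\Gamma_{k-1}(P_n)$, which is trivial by the inductive hypothesis, so the element lies in the torsion-free kernel and must itself be trivial.

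For (b), the inclusion $\Gamma_k(P_n) \subseteq \Gamma_l(P_n)$ induces a natural surjection $\pi \colon B_n/\Gamma_k(P_n) \to B_n/\Gamma_l(P_n)$ whose kernel is $\Gamma_l(P_n)/\Gamma_k(P_n)$, a subgroup of $P_n/\Gamma_k(P_n)$ and hence torsion free by part~(a). Given any finite subgroup $G \subseteq B_n/\Gamma_k(P_n)$, the intersection of $G$ with this kernel is a finite subgroup of a torsion-free group, hence trivial; consequently $\pi$ restricts to an injective homomorphism on $G$, and $\pi(G) \cong G$ is a subgroup of $B_n/\Gamma_l(P_n)$. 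If $G$ was normal in $B_n/\Gamma_k(P_n)$, then $\pi(G)$ is normal in $B_n/\Gamma_l(P_n)$, since a surjective homomorphism carries normal subgroups to normal subgroups. The final $p$-torsion assertion is the contrapositive applied to the cyclic group of order~$p$.

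The main difficulty lies in invoking the non-trivial freeness of the lower central series quotients of $P_n$ due to Falk--Randell and Kohno; once this structural input is accepted, everything else amounts to routine bookkeeping with short exact sequences, and in particular the induction in~(a) works smoothly because the kernel at each stage is central, turning the would-be counterexample to torsion-freeness into a torsion element of a free abelian group.
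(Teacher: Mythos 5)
Your argument is correct and follows essentially the route the paper itself indicates: part~(a) rests on the Falk--Randell/Kohno theorem that each graded quotient $\Gamma_{k-1}(P_n)/\Gamma_k(P_n)$ is free abelian (exactly the input the paper cites after the proposition), combined with a routine induction, and part~(b) is the standard projection argument using the torsion-free kernel $\Gamma_l(P_n)/\Gamma_k(P_n)$. The only cosmetic remark is that centrality of the kernel plays no role in your inductive step -- torsion-freeness of kernel and quotient already suffices for any extension -- but this does not affect correctness.
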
 

Note that the first part of \repr{tf} follows from papers by Falk and Randell~\cite[Theorem~4.2]{FRinv} and Kohno~\cite[Theorem~4.5]{Ko} who proved independently that for all $n\geq 2$ and $k\geq 1$,  the group $\Gamma_k(P_n)/\Gamma_{k+1}(P_n)$ is free Abelian of finite rank, the rank being related to the Poincar\'e polynomial of certain hyperplane complements.

%
%
%
%
%

It is well known that a set of generators for $P_{n}$ is given by the set $\brak{A_{i,j}}_{1\leq i<j\leq n}$~\cite{Ha}. If $j>i$ then we take $A_{j,i}=A_{i,j}$. By abuse of notation, for $k\geq 2$ and $1\leq i<j\leq n$, we also denote the image of $A_{i,j}$ under the canonical projection $P_{n}\to P_{n}/\Gamma_{k}(P_{n})$ by $A_{i,j}$. The groups $P_n/\Gamma_2(P_n)$ and $\Gamma_2(P_n)/\Gamma_3(P_n)$ are free Abelian groups of finite rank $n(n-1)/2$ and $n(n-1)(n-2)/6$ respectively~\cite[Theorem~4.2]{FRinv}. By~\cite[Section 3, p.~399]{GGO1} (resp.\ \cite[equation~(17)]{GGO2}), a basis for $P_n/\Gamma_2(P_n)$ (resp.\ $\Gamma_2(P_n)/\Gamma_3(P_n)$) is given by:
\begin{equation}\label{eq:defBBprime}
\text{$\mathcal{B}=\setl{A_{i,j}}{1\leq i<j\leq n}$ (resp.\ by $\mathcal{B}'=\setl{\alpha_{i,j,k}}{1\leq i<j<k\leq n}$),}
\end{equation}
where $\alpha_{i,j,k}=[A_{i,j},A_{j,k}]$.
If $\tau\in \sn$, $A_{i,j}\in \mathcal{B}$ and $\alpha_{i,j,k}\in \mathcal{B}'$ then by~\cite[Proposition~12]{GGO1} and~\cite[equation~(8)]{GGO2}, we have:
\begin{equation}\label{eq:actionSn}
\text{$\tau\cdot A_{i,j}=A_{\tau^{-1}(i), \tau^{-1}(j)}$ and $\tau\cdot \alpha_{i,j,k}= \tau \cdot [A_{i,j},A_{j,k}]= [A_{\tau^{-1}(i), \tau^{-1}(j)},A_{\tau^{-1}(j), \tau^{-1}(k)}]$.}
\end{equation}
The following lemma implies that $\sn$ acts on $\mathcal{B}$ and $\widehat{\mathcal{B}}'$ respectively, where $\widehat{\mathcal{B}}'=\mathcal{B}' \cup \mathcal{B}'^{-1}$. In each case, the nature of the action gives rise by linearity to an action of $\sn$ on the whole group. 
We also obtain some information about the stabilisers of the elements of $\mathcal{B}$ and $\widehat{\mathcal{B}}'$. This will play a crucial rôle in the proof of  Proposition \ref{prop:embedgen}.

\begin{lem}\label{lem:freeacjoin}
Let $n\geq 2$, and let $\tau\in \sn$.
\begin{enumerate}
\item\label{it:freeacjoina} Let $A_{i,j}$ be an element of the basis $\mathcal{B}$ of $P_n/\Gamma_2(P_n)$, where $1\leq i<j\leq n$. Then the element $\tau \cdot A_{i,j}$ given by the action of $\sn$ on $P_n/\Gamma_2(P_n)$ belongs to $\mathcal{B}$. Further, if $\tau\cdot A_{i,j}=A_{i,j}$ then the cycle decomposition of $\tau$ either contains a transposition, or at least two fixed elements. 
 
\item\label{it:freeacjoinb} Let $\alpha_{i,j,k}$ be an element of the basis $\mathcal{B}'$ of $\Gamma_2(P_n)/\Gamma_3(P_n)$, where $1\leq i<j<k\leq n$. Then the element $\tau\cdot \alpha_{i,j,k}$ given by the action of $\sn$ on $\Gamma_2(P_n)/\Gamma_3(P_n)$ belongs to $\widehat{\mathcal{B}}'$. Further, if $\tau\cdot \alpha_{i,j,k}\in \bigl\{\alpha_{i,j,k},\alpha_{i,j,k}^{-1}\bigr\}$ then the cycle decomposition of $\tau$ contains either a transposition, or a $3$-cycle, or at least three fixed elements.
\end{enumerate}
\end{lem}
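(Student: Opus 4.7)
By the first formula of~\reqref{actionSn}, $\tau \cdot A_{i,j} = A_{\tau^{-1}(i), \tau^{-1}(j)}$, which lies in $\mathcal{B}$ after applying the convention $A_{p,q} = A_{q,p}$. The equality $\tau \cdot A_{i,j} = A_{i,j}$ translates to $\{\tau^{-1}(i), \tau^{-1}(j)\} = \{i,j\}$, forcing either $\tau(i) = i$ and $\tau(j) = j$ (two fixed elements in the cycle decomposition of $\tau$) or $\tau(i) = j$ and $\tau(j) = i$ (a transposition).

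\textbf{Part (b), first assertion.} Setting $a = \tau^{-1}(i)$, $b = \tau^{-1}(j)$, $c = \tau^{-1}(k)$, the second formula of~\reqref{actionSn} gives $\tau \cdot \alpha_{i,j,k} = [A_{a,b}, A_{b,c}]$ with $a,b,c$ pairwise distinct. Write $x<y<z$ for the increasing reordering of $\{a,b,c\}$; the plan is to show $[A_{a,b}, A_{b,c}] = \alpha_{x,y,z}^{\pm 1}$ in $\Gamma_2(P_n)/\Gamma_3(P_n)$. The key tool is the bilinearity of the commutator pairing on this abelian quotient combined with the relation
\[
[A_{x,y}, \, A_{x,y} A_{x,z} A_{y,z}] = 1
\]
in $P_n$ (and its analogues with $A_{x,y}$ replaced on the left by $A_{x,z}$ or $A_{y,z}$), reflecting the centrality of $A_{x,y} A_{x,z} A_{y,z}$ in the copy of $P_3$ on the strands $x,y,z$. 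Expanding using $[u, vw] \equiv [u,v] + [u,w] \pmod{\Gamma_3(P_n)}$ and $[u,u] = 1$ yields
\[
[A_{x,y}, A_{x,z}] = \alpha_{x,y,z}^{-1}, \qquad [A_{x,z}, A_{y,z}] = \alpha_{x,y,z}^{-1},
\]
and antisymmetry of the bracket handles the three reverse-order cases. Running through the six possible orderings $(a,b,c)$ of the elements of $\{x,y,z\}$ each gives $[A_{a,b}, A_{b,c}] = \alpha_{x,y,z}^{\pm 1} \in \widehat{\mathcal{B}}'$, as required.

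\textbf{Part (b), second assertion.} If $\tau \cdot \alpha_{i,j,k} \in \{\alpha_{i,j,k}, \alpha_{i,j,k}^{-1}\}$, then by the first assertion and the linear independence of $\mathcal{B}'$ we must have $\{x,y,z\} = \{i,j,k\}$, so $\tau$ stabilises the set $\{i,j,k\}$ setwise. Its restriction to this set is an element of the symmetric group on three letters and hence is the identity (yielding three fixed elements of $\tau$), a transposition, or a $3$-cycle, giving the three alternatives in the statement. The main obstacle is the derivation of the two edge-commutator identities above; once these are in place, the rest of the argument in both parts is an immediate combinatorial case check.
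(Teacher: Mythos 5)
Your proof is correct, and its overall shape is the same as the paper's: in both parts you reduce the fixed-element claim to the observation that $\tau$ must stabilise $\brak{i,j}$ (resp.\ $\brak{i,j,k}$) setwise, and then read off the induced permutation of a two- (resp.\ three-) element set; part (a) is essentially identical to the paper's argument. Where you genuinely diverge is in the first assertion of part (b): the paper disposes of it by citing equations~(16) and~(18) of \cite{GGO2}, which state precisely that $\tau\cdot\alpha_{i,j,k}\in\widehat{\mathcal{B}}'$ and that $\tau\cdot\alpha_{i,j,k}=\alpha_{i,j,k}^{\pm1}$ forces $\tau(\brak{i,j,k})=\brak{i,j,k}$, whereas you rederive these facts from scratch. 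Your derivation is sound: the element $A_{x,y}A_{x,z}A_{y,z}$ is the full twist of the copy of $P_3$ on the strands $x,y,z$ and hence commutes with $A_{x,y}$, $A_{x,z}$, $A_{y,z}$ in $P_n$, and since conjugation acts trivially on $\Gamma_2(P_n)/\Gamma_3(P_n)$, the expansion $[u,vw]\equiv [u,v]+[u,w]$ modulo $\Gamma_3(P_n)$ is legitimate, yielding $[A_{x,y},A_{x,z}]\equiv[A_{x,z},A_{y,z}]\equiv\alpha_{x,y,z}^{-1}$ and, after the six-case check, $[A_{a,b},A_{b,c}]=\alpha_{x,y,z}^{\pm1}$; the deduction of $\brak{x,y,z}=\brak{i,j,k}$ from the basis property of $\mathcal{B}'$ is also exactly what the cited equation~(18) encodes. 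So your route is self-contained (and explains where the signs in $\widehat{\mathcal{B}}'$ come from), at the modest cost of having to justify the auxiliary centrality relation in $P_n$, while the paper's route is shorter because it leans on the earlier computations in \cite{GGO2}; both are valid.
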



\begin{proof}\mbox{}
\begin{enumerate}
\item The first part follows from~\reqref{actionSn}. 
If $1\leq i<j\leq n$ and $\tau\in \sn$ are such that
$\tau\cdot A_{i,j}=A_{i,j}$ then $\tau(\brak{i,j})=\brak{\tau(i),\tau(j)}=\brak{i,j}$,
which implies the second part of the statement.
\item The first part is a consequence of~\cite[equation~(16)]{GGO2}. Now suppose that $\tau\cdot \alpha_{i,j,k} \in \bigl\{\alpha_{i,j,k},\alpha_{i,j,k}^{-1}\bigr\}$, where $1\leq i<j<k\leq n$ and $\tau\in \sn$. By~\cite[equation~(18)]{GGO2}, we have $\tau(\brak{i,j,k})=\brak{\tau(i),\tau(j),\tau(k)}=\brak{i,j,k}$, from which we deduce the second part.\qedhere
\end{enumerate}
\end{proof}



\relem{freeacjoin} implies that if $G=\sn$ then $\mathcal{B}$ and $\widehat{\mathcal{B}}'$ are $G$-sets, and the action of $G$ on each of these sets extends to a $\Z$-linear action of $G$ on the free $\Z$-modules $\Z \mathcal{B}$ and $\Z {\mathcal{B}}'$ respectively, with respect to the embedding of $\widehat{\mathcal{B}}'$ in $\Z{\mathcal{B}}'$ given by $\alpha_{i,j,k}\mapsto \alpha_{i,j,k}$ and $\alpha_{i,j,k}^{-1} \mapsto (-1)\ldotp\alpha_{i,j,k}$, 
the underlying free Abelian groups
 being naturally identified with $P_n/\Gamma_2(P_n)$ and $\Gamma_2(P_n)/\Gamma_3(P_n)$ respectively. We conclude that $P_n/\Gamma_2(P_n)$ and $\Gamma_2(P_n)/\Gamma_3(P_n)$ each admit a $G$-module structure inherited by the action of $\sn$ on $\mathcal{B}$ and $\widehat{\mathcal{B}}'$ respectively.

Given a group $G$, let $\Z[G]$ denote its group ring. The underlying Abelian group, also denoted by $\Z[G]$, may be regarded as a $\Z[G]$-module (or as a $G$-module) via the multiplication in the ring $\Z[G]$ (see~\cite[Chapter~I, Sections~2~and~3]{Br} for more details), 
namely:
\begin{equation}\label{eq:actZG}
\text{$g\cdot \sum_{i=1}^{m}\, n_ig_{i} =\sum_{i=1}^{m}\, n_i(gg_i)$ for all $m\in \N$, $g,g_{1},\ldots,g_{m}\in G$ and $n_{1},\ldots,n_{m}\in \Z$.}
\end{equation}
The cohomology of the group $G$ with coefficients in $\Z[G]$ regarded as a $G$-module is well understood. In the case that $G$ is finite, we have the following result concerning its embedding in certain extensions whose kernel is a free $\Z[G]$-module. First recall that if $M$ is an Abelian group that fits into an extension of the following form:  
\begin{equation}\label{eq:seqsplitM}
1\to M \to E\to G\to 1,
\end{equation}
then $M$ is also a $\Z[G]$-module, the action being given by~\reqref{seqsplitM}.

\begin{prop}\label{prop:null}
Let $G$ be a finite group. Given an extension of the form~\reqref{seqsplitM}, suppose that $M$ is a free $\Z[G]$-module. Then the short exact sequence~\reqref{seqsplitM} splits. In particular, $G$ embeds in $E$ as a subgroup, and the restriction of the projection $E \to G$ to the embedded copy of $G$ is an isomorphism.
\end{prop}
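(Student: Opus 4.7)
The plan is to interpret the splitting question cohomologically and to show that the class of the extension~\reqref{seqsplitM} in $H^{2}(G;M)$ is trivial. Recall that since $M$ is abelian, the equivalence classes of extensions $1\to M \to E \to G \to 1$ inducing the given $G$-module structure on $M$ are in bijection with the elements of $H^{2}(G;M)$, and an extension splits if and only if its class is zero. So it suffices to prove that $H^{2}(G;M)=0$.

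First I would treat the case $M=\Z[G]$. As a $G$-module, $\Z[G]$ is the induced (equivalently, since $G$ is finite, coinduced) module $\operatorname{Ind}_{1}^{G}\Z$ from the trivial subgroup. By Shapiro's lemma (see~\cite[Chapter~III, Proposition~6.2]{Br}), we have
\[
H^{n}(G;\Z[G]) \cong H^{n}(\brak{1};\Z) = 0 \quad \text{for all } n\geq 1,
\]
so in particular $H^{2}(G;\Z[G])=0$.

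Next I would reduce the general case to this one. Since $M$ is a free $\Z[G]$-module, we can write $M=\bigoplus_{i\in I} \Z[G]$ for some indexing set $I$. Because $G$ is finite, the trivial $\Z[G]$-module $\Z$ admits a resolution $P_{\bullet}\to \Z$ by \emph{finitely generated} free $\Z[G]$-modules. Computing $H^{*}(G;-)=\operatorname{Ext}_{\Z[G]}^{*}(\Z,-)$ with this resolution, the relevant Hom-groups $\operatorname{Hom}_{\Z[G]}(P_{n},-)$ commute with arbitrary direct sums since each $P_{n}$ is finitely generated, and hence so does cohomology in each degree. Therefore
\[
H^{2}(G;M) \cong \bigoplus_{i\in I} H^{2}(G;\Z[G]) = 0,
\]
which shows that the extension~\reqref{seqsplitM} splits.

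Finally, a splitting is a homomorphism $s\colon G \to E$ such that the composition with the projection $\pi\colon E\to G$ equals $\id_{G}$; in particular $s$ is injective, so $G$ embeds in $E$ via $s$, and the restriction $\pi|_{s(G)}\colon s(G)\to G$ is the inverse of $s$ onto its image, hence an isomorphism. I do not expect any serious obstacle in this proof: the one technical point is the passage from $M=\Z[G]$ to a general free $\Z[G]$-module, which requires commuting cohomology with direct sums in the coefficient variable, and this is exactly where the finiteness of $G$ is used.
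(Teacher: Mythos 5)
Your proof is correct and follows essentially the same route as the paper: show $H^{2}(G;\Z[G])=0$ (the paper cites \cite[equation~6.5, p.~73]{Br}, you derive it via Shapiro's lemma, which is the same fact), pass to an arbitrary free module by commuting $H^{2}(G;-)$ with direct sums, and conclude splitting from the vanishing of $H^{2}$. If anything, you are more careful than the paper on the one delicate point, namely justifying that cohomology commutes with arbitrary direct sums in the coefficients via a finitely generated free resolution of $\Z$ over $\Z[G]$, which the paper asserts without comment.
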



\begin{proof}
First suppose that $M\cong \Z[G]$. By~\cite[equation~6.5, p.~73]{Br},
the group $H^{\ast}(G,\Z[G])$ is trivial for all $\ast\geq 1$. In particular, $H^2(G,\Z[G])=0$, which implies that any extension of the form~\reqref{seqsplitM} with $M=\Z[G]$
is split~\cite[Chapter~IV, Theorem~3.12]{Br}, where the action of the quotient on the kernel turns $\Z[G]$ into a 
$\Z[G]$-module that is isomorphic to the one-dimensional free $\Z[G]$-module.
Now suppose that $M$ is an arbitrary free $\Z[G]$-module whose $\Z[G]$-module structure is defined by~\reqref{seqsplitM}. 
So $M\cong \oplus _{J}\,\Z[G]$ 
as a $\Z[G]$-module for some set $J$, and 
$H^2(G, M)\cong H^2(G, \bigoplus_{J}\, \Z[G])\cong \bigoplus_J\, H^2(G,  \Z[G])=0$ by the first part of the proof. 
The short exact sequence~\reqref{seqsplitM} splits as in the case $M\cong \Z[G]$.
\end{proof}

\section{Cayley-type results for subgroups of $B_n/\Gamma_k(P_n)$, $k=2,3$}\label{sec:cayley}

Let $k\in \brak{2,3}$. In this section, we prove \reth{cayley23}
that may be viewed as an analogue of Cayley's theorem for $B_n/\Gamma_k(P_n)$. The following proposition will be crucial in the proofs of  Theorems~\ref{th:cayley23} and~\ref{th:main}.




\begin{prop}\label{prop:embedgen}
Let $k\in \brak{2,3}$, let $G$ be a finite group whose order is relatively prime with $k!$, let $m\geq 3$, and let $\map{\phi}{G}[{\sn[m]}]$ be an embedding. Assume that for all $g\in G\setminus \brak{e}$, the cycle decomposition of $\phi(g)$ contains at most $k-1$ fixed elements. Then the group $G$ embeds in $B_{m}/\Gamma_{k}(P_{m})$. 
\end{prop}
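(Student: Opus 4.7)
The plan is to pull back the canonical extension~(\ref{eq:sesgammak}) along $\phi$ to obtain
\begin{equation*}
1 \to P_m/\Gamma_k(P_m) \to E_k \to G \to 1,
\end{equation*}
where $E_k \subseteq B_m/\Gamma_k(P_m)$ is the preimage of $\phi(G)$, and then to split this extension by means of \repr{null}. Any section $G \to E_k$ then yields the desired embedding $G \hookrightarrow E_k \hookrightarrow B_m/\Gamma_k(P_m)$. The main challenge is to realize the appropriate kernel, or its successive Abelian sections, as a free $\Z[G]$-module so that \repr{null} applies.

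For $k=2$, the kernel $P_m/\Gamma_2(P_m) = \Z\mathcal{B}$ is free Abelian with $\Z[G]$-module structure induced from~(\ref{eq:actionSn}). Combining \relem{freeacjoin}(\ref{it:freeacjoina}) with the hypotheses shows that no non-trivial $g \in G$ stabilizes a basis element: the condition $\gcd(\ord{G},2!)=1$ forces $\ord{G}$ odd, which excludes transpositions in $\phi(g)$, while the bound $k-1=1$ on fixed elements excludes pairs of fixed points. Hence $G$ acts freely on $\mathcal{B}$, so $\Z\mathcal{B}$ decomposes as an orbit sum of regular modules $\Z[G]$ and is therefore a free $\Z[G]$-module; \repr{null} splits $E_2 \to G$.

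For $k=3$, the kernel $P_m/\Gamma_3(P_m)$ is torsion-free nilpotent of class two, with central filtration $1 \subset \Gamma_2(P_m)/\Gamma_3(P_m) \subset P_m/\Gamma_3(P_m)$ whose successive free-Abelian quotients are $\Z\mathcal{B}'$ and $\Z\mathcal{B}$. I plan to split $E_3 \to G$ in two stages: first, produce a subgroup $\overline{G} \cong G$ of $E_2 = E_3/(\Gamma_2(P_m)/\Gamma_3(P_m))$ by splitting the outer extension $1 \to P_m/\Gamma_2(P_m) \to E_2 \to G \to 1$; second, lift $\overline{G}$ to $E_3$ by splitting the induced inner extension
\begin{equation*}
1 \to \Gamma_2(P_m)/\Gamma_3(P_m) \to \widetilde{G} \to G \to 1,
\end{equation*}
where $\widetilde{G}\subseteq E_3$ is the preimage of $\overline{G}$. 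The inner step follows directly from \repr{null}: by \relem{freeacjoin}(\ref{it:freeacjoinb}), the conditions $\gcd(\ord{G},6)=1$ (which excludes transpositions and $3$-cycles in $\phi(g)$) and the bound $k-1=2$ on fixed elements (which excludes three fixed points) guarantee that no non-trivial $g \in G$ sends some $\alpha_{i,j,k}$ to $\pm \alpha_{i,j,k}$. Pairing the signed $G$-orbits on $\widehat{\mathcal{B}}'$ with their negatives, one picks a representative from each such pair and thereby exhibits $\Z\mathcal{B}' = \Gamma_2(P_m)/\Gamma_3(P_m)$ as a free $\Z[G]$-module under the signed-permutation action.

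The main obstacle will be the outer step of the $k=3$ case. A naive application of \repr{null} to $E_2 \to G$ would require $\Z\mathcal{B}$ to be a free $\Z[G]$-module, which by \relem{freeacjoin}(\ref{it:freeacjoina}) corresponds to the stronger $k=2$-style condition ``at most one fixed element''; under only the $k=3$ hypothesis, a non-trivial $g$ whose $\phi$-image fixes two indices $\{i,j\}$ stabilizes $A_{i,j}$ and contributes a non-trivial summand $\Z[G/H_{\{i,j\}}]$ to $\Z\mathcal{B}$. Overcoming this will require exploiting additional structure imposed by the $k=3$ hypothesis — notably that each non-trivial $g$ fixes at most one pair, so the 2-point stabilizers of distinct basis elements intersect trivially — either to split the outer extension by a finer cohomological argument tailored to the resulting decomposition of $\Z\mathcal{B}$ into permutation modules, or to construct a section of $E_3 \to G$ directly without passing through $E_2$.
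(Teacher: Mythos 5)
Your treatment of the case $k=2$ and of the inner step of the case $k=3$ coincides with the paper's proof: oddness of $\ord{G}$ excludes transpositions, the fixed-point bound then forces the stabiliser of each basis element to be trivial, so $P_m/\Gamma_2(P_m)$ (resp.\ $\Gamma_2(P_m)/\Gamma_3(P_m)$, using that the orbits of $\alpha_{i,j,k}$ and $\alpha_{i,j,k}^{-1}$ are disjoint and choosing one element of each pair as a basis vector) is a free $\Z[G]$-module, and \repr{null} splits the pulled-back extension. However, your proposal is incomplete: you explicitly leave the outer step of the $k=3$ case (producing a copy of $G$ inside $B_m/\Gamma_2(P_m)$ from the hypothesis ``at most two fixed elements'') unresolved, offering only directions in which a solution might be sought. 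As it stands you have therefore proved the proposition for $k=2$ only.

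For comparison, the paper disposes of that outer step in one line: since $\gcd(\ord{G},6)=1$ implies $\ord{G}$ odd, it asserts ``as above'' that $G$ is isomorphic to a subgroup $\widehat{G}$ of $B_m/\Gamma_2(P_m)$, i.e.\ it reuses the $k=2$ argument, and only then splits the extension by $\Gamma_2(P_m)/\Gamma_3(P_m)$ over $\widehat{G}$. Your objection to this reuse is well taken as far as the free-module mechanism goes: the $k=2$ argument needs at most one fixed point, and if some non-trivial $g$ has $\phi(g)$ fixing exactly two points $i,j$, then $g$ stabilises $A_{i,j}$ by \relem{freeacjoin}(\ref{it:freeacjoina}), the orbit has length less than $\ord{G}$, $P_m/\Gamma_2(P_m)$ contains a summand $\Z[G/H]$ with $H\neq 1$, and $H^2(G,\Z[G/H])\cong H^2(H,\Z)\neq 0$, so \repr{null} gives nothing and one would indeed need a finer argument (vanishing of the relevant components of the restricted extension class, or an appeal to the result of Beck and Marin). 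So the obstacle you identify is real relative to the paper's machinery and is precisely the point the paper treats as immediate; note that in every application made of the proposition (the proofs of \reth{cayley23} and \reth{main}) non-trivial elements have at most one fixed point, so there the outer step genuinely is the $k=2$ case. To complete your proof in the spirit of the paper you should either carry out that reduction under the stronger one-fixed-point condition, or supply the additional cohomological argument you allude to; without one of these the case $k=3$ remains unproven in your write-up.
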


\begin{proof}
Assume first that $k=2$, so $\ord{G}$ is odd. Let $\widetilde{G}$ be the (isomorphic) image of $G$ by $\phi$ in $\sn[m]$. Taking the inverse image by $\overline{\sigma}$ of $\widetilde{G}$ in~\reqref{sesgammak} with $n=m$ and $k=2$ gives rise to the following short exact sequence: 
\begin{equation}\label{eq:case1a}
1\to P_{m}/\Gamma_2(P_{m})\to \overline{\sigma}^{-1}(\widetilde{G}) \xrightarrow{\overline{\sigma}\left\lvert_{\overline{\sigma}^{-1}(\widetilde{G})}\right.} \widetilde{G}\to 1.
\end{equation}
From \resec{split}, $G$, and hence $\widetilde{G}$, acts on the free Abelian group $P_{m}/\Gamma_2(P_{m})$ of rank $m(m-1)/2$, and the restriction of this action to the basis $\mathcal{B}$ is given by~\reqref{actionSn}. Let $1\leq i<j\leq m$, and let $g\in G$ be such that $\phi(g) \cdot A_{i,j}=A_{i,j}$. Since $\ord{G}$ is odd, the cycle decomposition of $\phi(g)$ contains no transposition, and by \relem{freeacjoin}(\ref{it:freeacjoina}) and the hypothesis on the fixed points of $\phi(g)$, we see that $g=e$. So for all $1\leq i<j\leq m$, the orbit of $A_{i,j}$ contains exactly $\ord{G}$ elements. In particular, $\ord{G}$ divides $m(m-1)/2$, and $\mathcal{B}$ may be decomposed as the disjoint union of the form $\coprod_{k=1}^{m(m-1)/2\ord{G}}\, \mathcal{O}_k$, where each $\mathcal{O}_k$ is an orbit of length $\ord{G}$. For $k=1,\ldots, m(m-1)/2\!\ord{G}$, let $e_{k}\in \mathcal{O}_k$, and let $H_{k}$ denote the subgroup of $P_{m}/\Gamma_2(P_{m})$ generated by $\mathcal{O}_k$. Then $P_{m}/\Gamma_2(P_{m})\cong \bigoplus_{k=1}^{m(m-1)/2\ord{G}}\, H_{k}$, and for all $x\in \mathcal{O}_k$, there exists a unique element $g\in G$ such that $\phi(g) \cdot e_{k}=x$. Thus the map that to $x$ associates $g$ defines a bijection between $\mathcal{O}_k$ and $G$. Since $\mathcal{O}_k$ is a basis of $H_{k}$, if $h\in H_{k}$, there exists a unique family of integers $\brak{q_{g}}_{g\in G}$ such that $h=\prod_{g\in G} (\phi(g)\cdot e_{k})^{q_{g}}$, and the map $\map{\Phi}{H_{k}}[\Z[G]]$ defined by $\Phi(h)= \sum_{g\in G} q_{g} g$ may be seen to be an isomorphism. Further, via $\Phi$, the action of $G$ on $H_{k}$ corresponds to the usual action of $G$ on $\Z[G]$. More precisely, if $\gamma\in G$, then:
\begin{align*}
\Phi(\phi(\gamma) \cdot h)&= \Phi\Bigl(\phi(\gamma) \cdot \prod_{g\in G} (\phi(g) \cdot e_{k})^{q_{g}} \Bigr)=\Phi\Bigl(\prod_{g\in G} (\phi(\gamma g)\cdot e_{k})^{q_{g}}\Bigr)\\
&=\sum_{g\in G} q_{g} \gamma g=\gamma \cdot \Phi(h),
\end{align*}
the action of $\gamma$ on $\Phi(h)$ being given by~\reqref{actZG}. Hence $P_{m}/\Gamma_2(P_{m})\cong \bigoplus_{1}^{m(m-1)/2\ord{G}}\, \Z[G]$ as $\Z[G]$-modules, and by \repr{null}, we conclude that the extension~\reqref{case1a} splits. Thus $G$ is isomorphic to a subgroup $\widehat{G}$ of $\overline \sigma^{-1}(\widetilde{G})$, which in turn is a subgroup of $B_{m}/\Gamma_2(P_{m})$, and this proves the result in the case $k=2$. Now suppose that $k=3$. Since $\gcd(\ord{G},6)=1$, $\ord{G}$ is odd, and as above, $G$ is isomorphic to the subgroup $\widehat{G}$ of $B_{m}/\Gamma_2(P_{m})$. Consider the following extension:
\begin{equation*}
1\to \Gamma_2(P_{m})/\Gamma_3(P_{m})\to B_{m}/\Gamma_3(P_{m})\stackrel{\rho}{\to} B_{m}/\Gamma_2(P_{m})\to 1,
\end{equation*}
where $\map{\rho}{B_{m}/\Gamma_3(P_{m})}[B_{m}/\Gamma_2(P_{m})]$ denotes the canonical projection. Taking the inverse image of $\widehat{G}$ by $\rho$ gives rise to the following short exact sequence:
\begin{equation}\label{eq:gamma23Ga}
1\to \Gamma_2(P_{m})/\Gamma_3(P_{m})\to \rho^{-1}(\widehat{G})\xrightarrow{\rho\left\lvert_{\rho^{-1}(\widehat{G})}\right.} \widehat{G}\to 1.
\end{equation}
Let $\map{\phi'}{\widehat{G}}[{\sn[m]}]$ denote the embedding of $\widehat{G}$ in $\sn[m]$ given by composing $\phi$ by an isomorphism between $\widehat{G}$ and $G$. Then $\widehat{G}$  acts on the kernel $\Gamma_2(P_{m})/\Gamma_3(P_{m})$ of~\reqref{gamma23Ga} via~\reqref{actionSn}. Since $\gcd(\bigl\lvert\widetilde{G}\bigr\rvert,6)=1$, for all $\widehat{g}\in \widehat{G} \setminus \brak{e}$, the cycle decomposition of $\phi'(\widehat{g})$ contains neither a transposition nor a $3$-cycle, and by hypothesis, $\phi'(\widehat{g})$ contains at most $2$ fixed elements. It follows from \relem{freeacjoin}(\ref{it:freeacjoinb}) that if $\phi'(\widehat{g})\cdot \alpha_{i,j,k}\in \bigl\{\alpha_{i,j,k},\alpha_{i,j,k}^{-1}\bigr\}$, where $1\leq i<j<k\leq n$ and $\widehat{g}\in \widehat{G}$, then $\widehat{g}=e$. In particular, the orbits of $\alpha_{i,j,k}$ and $\alpha_{i,j,k}^{-1}$ are disjoint, every orbit contains exactly $\ord{G}$ elements, and thus $\ord{G}$ divides $m(m-1)(m-2)/6$. So there exists a basis of $\Gamma_2(P_{m})/\Gamma_3(P_{m})$ that is the disjoint union of $m(m-1)(m-2)/6\!\ord{G}$ orbits of elements of $\widehat{B}'$, and that for all $1\leq i<j<k\leq n$, contains exactly one element of $\bigl\{\alpha_{i,j,k},\alpha_{i,j,k}^{-1}\bigr\}$. As in the case $k=2$, we conclude that the short exact sequence~\reqref{gamma23Ga} splits, and that $G$ embeds in $B_{m}/\Gamma_3(P_{m})$.
\end{proof}


\begin{rem}
An efficient way to use \repr{embedgen} is as follows. Let $\map{\phi}{G}[{\sn[n]}]$ be an embedding, and for an order-preserving inclusion $\iota\colon\thinspace \brak{1,2,\ldots,m} \lhra \brak{1,2,\ldots,n}$, where  $m<n$, consider the embedding $\sn[m] \to \sn$. Suppose that the  homomorphism $\phi$ factors through $\sn[m]$, and let $\map{\phi'}{G}[{\sn[m]}]$ be the factorisation. It may happen that the hypotheses of \repr{embedgen} hold for $\phi'$ but not for $\phi$. In this case, we may apply this proposition to $\phi'$ to conclude the existence of an embedding of $G$ in $B_m/\Gamma_k(P_m)$, which in turn implies that $G$ embeds in $B_n/\Gamma_k(P_n)$.
\end{rem}


We are now able to prove \reth{cayley23}.

\begin{proof}[Proof of \reth{cayley23}]
Let $G$ be a finite group, and let $k\in \brak{2,3}$. If $G$ embeds in $B_{\ord{G}}/\Gamma_k(P_{\ord{G}})$ then \reth{torsionGamma23} implies that $\gcd(\ord{G},k!)=1$. Conversely, suppose that $\gcd(\ord{G},k!)=1$. Consider the classical embedding of $G$ in $\sn[\ord{G}]$ that is used in the proof of Cayley's theorem (note that we identify $\sn[G]$ with $\sn[\ord{G}]$). More precisely, let $\map{\psi}{G\times G}[G]$ denote the action of $G$ on itself given by left multiplication. For all $g\in G$, the map $\map{\psi_{g}}{G}$ defined by $\psi_{g}(h)=\psi(g,h)=gh$ is a permutation of $G$, and the map $\map{\Psi}{G}[{\sn[\ord{G}]}]$ defined by $\Psi(g)=\psi_{g}$ is an injective homomorphism, so $\widetilde{G}=\setl{\psi_{g}}{g\in G}$ is a subgroup of $\sn[\ord{G}]$ that is isomorphic to $G$. The action $\psi$ is free: if $h\in G$ then $h=\psi_{g}(h)$ if and only if $g=e$. In particular, if $g\neq e$ then $\psi_{g}$ is fixed-point free, and so the permutation $\Psi(g)$ is fixed-point free for all $g\in G \setminus \brak{e}$. Taking $m=\ord{G}$, the hypotheses of \repr{embedgen} are satisfied for the embedding $\map{\Psi}{G}[{\sn[\ord{G}]}]$, and we conclude that $G$ embeds in $B_{\ord{G}}/\Gamma_k(P_{\ord{G}})$.
\end{proof}

\section{Embeddings of some semi-direct products in $B_n/\Gamma_k(P_n)$, $k\in \brak{2,3}$}\label{sec:semidirect}

Let $m,n\in \N$, and let $k\in \brak{2,3}$. 
In this section, we study the problem of embedding groups of the form $\Z_n\rtimes_{\theta} \Z_m$ in $B_n/\Gamma_k(P_n)$, where the representation $\map{\theta}{\Z_m}[\aut{\Z_n}]$ is taken to be injective. With additional conditions on $\theta$, in \resec{proofmain}, we prove \reth{main}. In \resec{further}, we study the two non-Abelian groups of order $27$. The first such group is of the form $\Z_n\rtimes_{\theta} \Z_m$, where $\theta$ is injective, but the additional conditions of \reth{main} are not satisfied. The second such group is not of the form $\Z_n\rtimes_{\theta} \Z_m$. We prove that both of these groups embed in $B_{9}/\Gamma_{2}(P_{9})$. In the case of the first group, this shows that the hypotheses of \reth{main} are sufficient to embed $\Z_n\rtimes_{\theta} \Z_m$ in $B_n/\Gamma_k(P_n)$, but not necessary. With respect to~\reqref{ineqmG}, these groups also satisfy $m(G)=\ell_k(G) <\lvert G\rvert$, which is coherent with~\cite[Corollary~13]{MaV} in the case $k=2$. 



\subsection{Proof of \reth{main}}\label{sec:proofmain}


Let $m,n\in \N$. In this section, $G$ will be a group of the form $\Z_n\rtimes_{\theta} \Z_m$. We study the question of whether $G$ embeds in $B_n/\Gamma_k(P_n)$, where $k\in \brak{2,3}$. 
By~\reth{torsionGamma23}, when $G=\Z_n\rtimes_{\theta} \Z_m$ for such an embedding to exist, $\gcd(\ord{G},k!)=1$, and so we shall assume from now on that this is the case. 
In order to apply \repr{embedgen}, we will make use of a specific embedding of $G$ in $\sn$ studied by Marin in the case where $n$ is prime and $m=(n-1)/2$~\cite{Ma}, as well as 
the restriction to $G$ of the action of $\sn$ on $P_n/\Gamma_2(P_n)$ and $\Gamma_2(P_n)/\Gamma_3(P_n)$  described by \req{actionSn}. If $q\in \N$, we will denote the image of an integer $r$ under the canonical projection $\Z\to \Z_{q}$ by $r_{q}$, or simply by $r$ if no confusion is possible.
Following~\cite[proof of Corollary~3.11]{Ma}, we start by describing a homomorphism from $K$ to $\sn[A]$, for groups of the form $K=A\rtimes_{\theta} H$, where $A$ and $H$ are finite, $A$ is Abelian, and $\sn[A]$ denotes the symmetric group on the set $A$. Let $(u,v)\in K$, where the elements of $K$ are written with respect to the semi-direct product $A\rtimes_{\theta} H$, and let $\map{\varphi_{(u,v)}}{A}$ be the affine transformation defined by:
\begin{equation}\label{eq:phi}
\text{$\varphi_{(u,v)}(z)=\theta(v)(z)+u$ for all $z\in A$}.
\end{equation}

\begin{lem}\label{lem:Ginjphi}\mbox{}
\begin{enumerate}[(a)]
\item\label{it:Ginjphia} For all $(u,v)\in K$, the map $\map{\varphi_{(u,v)}}{A}$ defined in~\reqref{phi} is a bijection. 
\item\label{it:Ginjphib} Let $\map{\varphi}{K}[{\sn[A]}]$ be the map defined by $\varphi(u,v)=\varphi_{(u,v)}$ for all $(u,v)\in K$. Then $\phi$ is a homomorphism. If the action $\map{\theta}{H}[\aut{A}]$ is injective then $\phi$ is too.
\end{enumerate}
\end{lem}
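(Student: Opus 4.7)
The plan is to verify both parts by direct computation, exploiting that each $\theta(v)\in \aut{A}$ is a bijection of $A$ and that multiplication in $K=A\rtimes_{\theta} H$ is given by $(u_1,v_1)(u_2,v_2)=(u_1+\theta(v_1)(u_2),\,v_1v_2)$.

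For part~(\ref{it:Ginjphia}), I would observe that $\phi_{(u,v)}$ is the composite of $\theta(v)\in \aut{A}$, a bijection of $A$, with the translation $z\mapsto z+u$, which is likewise a bijection of $A$ since $A$ is a group. A composite of bijections is a bijection, so $\phi_{(u,v)}\in \sn[A]$.

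For part~(\ref{it:Ginjphib}), I would compute $\phi_{(u_1,v_1)}\circ \phi_{(u_2,v_2)}$ applied to an arbitrary $z\in A$. Using the $\Z$-linearity of $\theta(v_1)$ and the fact that $\theta$ is a homomorphism $H\to \aut{A}$, we obtain
\begin{equation*}
\phi_{(u_1,v_1)}(\phi_{(u_2,v_2)}(z))=\theta(v_1)(\theta(v_2)(z)+u_2)+u_1=\theta(v_1v_2)(z)+\theta(v_1)(u_2)+u_1,
\end{equation*}
which by the semi-direct product law equals $\phi_{(u_1+\theta(v_1)(u_2),\,v_1v_2)}(z)=\phi_{(u_1,v_1)(u_2,v_2)}(z)$. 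Hence $\phi$ is a homomorphism from $K$ to $\sn[A]$.

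To prove the injectivity assertion, I would examine the kernel: if $\phi_{(u,v)}$ is the identity permutation of $A$, then evaluating at $z=0\in A$ yields $u=0$, and substituting back leaves $\theta(v)(z)=z$ for every $z\in A$, so $\theta(v)=\id$. Since $\theta$ is assumed injective, $v$ is the identity of $H$, and therefore $(u,v)$ is trivial. There is no serious obstacle in this proof; the only points to watch are the order of composition of the affine maps and the precise form of the semi-direct product law used to identify $\phi_{(u_1,v_1)}\circ \phi_{(u_2,v_2)}$ with $\phi_{(u_1,v_1)(u_2,v_2)}$.
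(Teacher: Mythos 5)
Your proposal is correct and follows essentially the same route as the paper: part~(a) via the fact that $\theta(v)$ is an automorphism (composed with a translation), part~(b) by the same direct computation of the composite of two affine maps against the semi-direct product law, and injectivity by evaluating a kernel element at the identity of $A$ to get $u=0$ and then invoking injectivity of $\theta$. The only cosmetic difference is the labelling of the two factors in the homomorphism computation, which is immaterial.
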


\begin{proof}\mbox{}
\begin{enumerate}
\item If $(u,v)\in K$, the statement follows from the fact that $\theta(v)$ is an automorphism of $A$.
\item Part~(\ref{it:Ginjphia}) implies that the map $\varphi$ is well defined. We now prove that $\varphi$ is a homomorphism. If $(u_1,v_1),(u_2,v_2)\in K$, then for all $z\in A$, we have: 
\begin{align*}
(\varphi_{(u_2,v_2)}\circ \varphi_{(u_1,v_1)})(z)&=\varphi_{(u_2,v_2)}( \theta(v_1)(z)+u_1)= \theta(v_2)(\theta(v_1)(z)+u_1)+u_2\\
&= \theta(v_2)(\theta(v_1)(z))+\theta(v_2)(u_1)+u_2\\
&= \theta(v_2 v_1)(z)+\theta(v_2)(u_1)+u_2\\
&= \varphi_{(u_2+\theta(v_2)(u_1), v_2v_1)}(z) = \varphi_{(u_2,v_{2})(u_{1},v_{1})}(z),
\end{align*}
so $\varphi_{(u_2,v_2)}\circ \varphi_{(u_1,v_1)}=\varphi_{(u_2,v_{2})(u_{1},v_{1})}$, and $\varphi$ is a homomorphism. 
Finally, suppose that $\theta$ is injective, and let $(u,v)\in \ker{\varphi}$. Then $z=\varphi(u,v)(z)=\varphi_{(u,v)}(z)=\theta(v)(z)+u$ for all $z\in A$. Taking $z$ to be the trivial element $e_A$ of $A$ yields $u=e_A$. Hence $\theta(v)=\id_{A}$, and  it follows that $v$ is the trivial element $H$ by the injectivity of $\theta$, which completes the proof of the lemma.\qedhere
\end{enumerate}
\end{proof}

As above, we consider the group $G=\Z_{n} \rtimes_{\theta} \Z_{m}$, where $\map{\theta}{\Z_{m}}[\aut{\Z_{n}}]$ is the associated action. Note that we can apply the construction of~\req{phi} to $G$, and so the conclusions of \relem{Ginjphi} hold for $G$. The element $1_m$ generates the additive group $\Z_m$, so $\theta(1_m)$ is an automorphism of $\Z_{n}$ whose order divides $m$, and since any automorphism of $\Z_n$ is multiplication by an integer that is relatively prime with $n$, there exists $1\leq t<n$ such that $\gcd(t,n)=1$, $\theta(1_m)$ is multiplication by $t$, and $t^m\equiv 1 \bmod{n}$.
If $\theta$ is injective, the order of the automorphism $\theta(1_m)$ is equal to $m$, and so $t^{l}\not\equiv 1 \bmod{n}$ for all $1\leq l<m$, 
but this does not imply that $t^{l}-1$ is relatively prime with $n$. However, the condition that $\gcd(t^{l}-1,n)=1$ for all $1\leq l<m$ is the hypothesis that we require in order to prove \reth{main}, and as we shall now see, implies that $\theta$ is injective.

\begin{lem}\label{lem:fundamental} 
Let $n,m\in \N$, let $G$ be a semi-direct product of the form $\Z_{n} \rtimes_{\theta} \Z_{m}$, and let $1\leq t<n$ be such that $\theta(1_{m})$ is multiplication in $\Z_{n}$ by $t$. Suppose that $\gcd(t^l-1,n)=1$ for all $1\leq l \leq m-1$.
\begin{enumerate}[(a)]
\item\label{it:funda} The action $\map{\theta}{\Z_{m}}[\aut{\Z_{n}}]$ is injective, and the homomorphism $\map{\phi}{G}[{\sn[\Z_{n}]}]$ defined in \relem{Ginjphi} is injective. 
\item\label{it:fundb} For all $(u,v)\in G\setminus\brak{(0_{n},0_{m})}$, the permutation $\phi(u,v)$ fixes at most one element, and if $v\neq 0_{m}$ then $\phi(u,v)$ fixes precisely one element.
\end{enumerate}

\end{lem}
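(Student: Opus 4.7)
The plan is to prove the two parts directly from the defining formula $\varphi_{(u,v)}(z)=\theta(v)(z)+u$ of \reqref{phi}, together with the fact that $\theta(v)$ is multiplication in $\Z_n$ by $t^l$, where $l\in \brak{0,1,\ldots,m-1}$ is the representative of $v\in \Z_m$.

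For part~(\ref{it:funda}), I would first establish injectivity of $\theta$ by contradiction. Suppose that $\theta(l_m)=\id_{\Z_n}$ for some $l$ with $1\leq l \leq m-1$. Since $\theta(l_m)=\theta(1_m)^l$ is multiplication by $t^l$ in $\Z_n$, this forces $t^l\equiv 1\bmod n$, i.e.\ $n\mid t^l-1$. This contradicts the standing hypothesis $\gcd(t^l-1,n)=1$ (the degenerate case $n=1$ being excluded by the context of the semi-direct product). Hence $\theta$ is injective, and \relem{Ginjphi}(\ref{it:Ginjphib}) then yields the injectivity of $\phi$.

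For part~(\ref{it:fundb}), I would analyse the fixed-point equation of the affine map $\varphi_{(u,v)}$. A point $z\in \Z_n$ is fixed by $\varphi_{(u,v)}$ if and only if
\begin{equation*}
(t^l-1)z \equiv -u \pmod{n},
\end{equation*}
where $l$ is the integer representative of $v$ in $\brak{0,1,\ldots,m-1}$. Two cases then arise. If $v=0_m$, so $l=0$, the equation reduces to $u=0_n$, which is incompatible with $(u,v)\neq (0_n,0_m)$; hence $\varphi_{(u,v)}$ has no fixed point, which certainly satisfies ``at most one''. If $v\neq 0_m$, then $1\leq l\leq m-1$, so by hypothesis $\gcd(t^l-1,n)=1$, meaning that $t^l-1$ is a unit in $\Z_n$, and the fixed-point equation has the unique solution $z=-(t^l-1)^{-1}u$. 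This gives exactly one fixed point, as required.

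The argument is essentially an exercise in modular arithmetic once the action is unwound, and there is no real obstacle; the only thing that requires slight care is keeping track of the identification between $v\in \Z_m$ and its integer representative $l$ in order to correctly invoke the hypothesis $\gcd(t^l-1,n)=1$, which is valid precisely for $1\leq l\leq m-1$ and is therefore available in exactly the case $v\neq 0_m$ that we need.
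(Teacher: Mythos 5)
Your proof is correct and follows essentially the same route as the paper: injectivity of $\theta$ via the observation that $\theta(l_m)=\id$ would force $t^l\equiv 1\bmod n$ contradicting $\gcd(t^l-1,n)=1$ (with injectivity of $\phi$ then coming from Lemma~\ref{lem:Ginjphi}(\ref{it:Ginjphib})), and the fixed-point count via the affine equation $(t^l-1)z=-u$ in $\Z_n$, split into the cases $v=0_m$ (no fixed point since $u\neq 0_n$) and $v\neq 0_m$ (unique fixed point since $t^l-1$ is a unit). No gaps to report.
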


\begin{proof}\mbox{}
\begin{enumerate}[(a)]
\item To prove the first part, we argue by contraposition. Suppose that $\theta$ is not injective. Then there exists $1\leq l<m$ such that $\theta(l_{m})=\id_{\Z_{n}}$. Now $\theta(1_{m})$ is multiplication by $t$, so $\theta(l_{m})$ is multiplication by $t^{l}$, and thus $t^{l} \equiv 1 \bmod{n}$, which implies that $\gcd(t^{l}-1,n)\neq 1$. The second part of the statement follows from \relem{Ginjphi}(\ref{it:Ginjphib}).

\item Let
$(u, v)\in G\setminus \brak{(0_{n},0_{m})}$. 
If $v=0_{m}$ then $u\neq 0_{n}$, so $\phi_{(u,0_{m})}(z)=z+u\neq z$ for all $z\in \Z_{n}$, hence $\phi_{(u,0_{m})}$ is fixed-point free. So suppose that $v\neq 0_{m}$. Since $\theta(v)$ is multiplication by $t^{v}$, the corresponding affine transformation $\map{\varphi_{(u,v)}}{\Z_{n}}$ is given by $\varphi_{(u,v)}(z)=t^vz+u$ for all $z\in \Z_{n}$. By hypothesis, $\gcd(t^v-1,n)=1$, so $t^{v}-1$ is invertible in $\Z_{n}$, and if $z\in \Z_{n}$,  we have:
\begin{equation*}
\varphi_{(u,v)}(z)=z \Longleftrightarrow t^v z+u=z \Longleftrightarrow (t^{v}-1)z=-u \Longleftrightarrow z=-u(t^{v}-1)^{-1}
\end{equation*}
in $\Z_{n}$.
Hence $\varphi_{(u,v)}$ possesses a unique fixed point.\qedhere
\end{enumerate}
\end{proof}

%
%

The above framework enables us to prove \reth{main} using \repr{embedgen}.


\begin{proof}[Proof of \reth{main}]  
Let $\map{\varphi}{G}[{\sn[\Z_{n}]}]$ be the embedding of $G$ in $\sn[\Z_{n}]$ of \relem{Ginjphi}(\ref{it:Ginjphib}). By \relem{fundamental}(\ref{it:fundb}), for all $g\in G\setminus \brak{e}$, the cycle decomposition of the permutation $\varphi(g)$ contains at most one fixed point.
So the embedding $\phi$ satisfies the hypotheses of \repr{embedgen}, from which we conclude that $G$ embeds in $B_n/\Gamma_2(P_n)$ (resp.\ in $B_n/\Gamma_3(P_n)$) if $mn$ is odd (resp.\ if $\gcd(mn,6)=1$).
\end{proof}

As an application of \reth{main}, we consider groups of the form $\Z_n\rtimes_{\theta} H$, where $H$ is finite, $n=p^r$ is a power of an odd prime $p$, where $r\in \N$, and the homomorphism $\map{\theta}{H}[\aut{\Z_{p^{r}}}]$ is injective.
Recall from~\cite[p.~146, lines~16--17]{Za} that:
\begin{equation}\label{eq:decompZpr}
\aut{\Z_{p^r}}\cong \Z_{(p-1)p^{r-1}}\cong \Z_{p-1}\oplus \Z_{p^{r-1}},
\end{equation}
where the isomorphisms of~\reqref{decompZpr} are described in~\cite[pp.~145--146]{Za}. We now prove \reco{app} by studying  injective actions $\map{\theta}{H}[\Z_{(p-1)p^{r-1}}]$, where $H$ is a cyclic group whose order is an odd divisor of $p-1$.

\begin{proof}[Proof of \reco{app}]
Let $p>2$ be prime, let $p-1=2^jd$, where $d$ is odd, and let $d_{1}$ be a divisor of $d$. So identifying $\aut{\Z_{p^{r}}}$ with $\Z_{p-1}\oplus \Z_{p^{r-1}}$ via~\reqref{decompZpr}, 
and taking the subgroup $H$ in the above discussion to be $\Z_{d_1}$,  there exists an injective homomorphism $\map{\theta}{\Z_{d_1}}[\Z_{p-1}\oplus \Z_{p^{r-1}}]$, where $\theta(1_{d_1})$ is an automorphism of $\Z_{p^{r}}$ given by multiplication by an integer $t$ that is relatively prime with $p$, and the order $d_{1}$ of this automorphism is also relatively prime with $p$. In particular, the order of $t$ in the group $\Z_{p^r}^{\ast}$ is equal to $d_{1}$. We claim that $\gcd(t^{l}-1,p)=1$ for all $0<l<d_1$. Suppose on the contrary that
$t^l-1$ is divisible by $p$ for some $0<l<d_1$. Then $t^{l}=1+kp$, where $k\in \N$, and~\cite[p.~146, line~12]{Za} implies that the order of $t^{l}$ in $\Z_{p^r}^{\ast}$ is a power of $p$, which contradicts the fact that $t$ is of order $d_{1}$ and $\gcd(d_{1},p)=1$. This proves the claim. 
Part~(\ref{it:appa}) (resp.\ part~(\ref{it:appb})) follows using the fact that the order of $\Z_{p^{r}}\rtimes_{\theta} \Z_{d_{1}}$ is odd (resp.\ relatively prime with $6$) and by applying \reth{main}.
\end{proof}

\begin{rem}
As we mentioned in the introduction, the results of \reco{app}  are sharp in the sense that if $k\in\brak{2,3}$, the groups $\Z_{p^r}\rtimes_{\theta} \Z_d$ satisfy $\ell_{k}(G)=m(G)=p^{r}$ if $d>1$, where $k\in \brak{2,3}$. 
This result no longer holds if we remove the hypothesis that the order of the group being acted upon is a prime power. For example, if in the semi-direct product $G=\Z_{n} \rtimes \Z_{d}$, we take $n=15$ and $d=1$ then $G\cong \Z_{3}\times \Z_{5}$, and $\ell_{2}(G)=m(G)=8<15$ using~\cite[Theorem~3(b)]{GGO1} or~\cite[Corollary~13]{MaV}.
\end{rem}

\subsection{Further examples}\label{sec:further}

In this final section, we give examples of two semi-direct products of the form $\Z_{9}\rtimes_{\theta} \Z_3$ and $(\Z_{3}\oplus \Z_{3})\rtimes_{\theta} \Z_3$ respectively that do not satisfy the hypotheses of  \reth{main}, but that embed in $B_9/\Gamma_2(P_9)$. 
We start with some general comments. If $p$ is an odd prime, consider the group $G=\Z_{p^r}\rtimes_{\theta} \Z_{p^{r-1}(p-1)}$, where with respect to the notation of~\cite[p.~146, line~8]{Za}, the homomorphism $\map{\theta}{\Z_{p^{r-1}(p-1)}}[\aut{\Z_{p^r}}]$ sends $1_{p^{r-1}(p-1)}$ to the element of $\aut{\Z_{p^r}}$ given by multiplication in $\Z_{p^{r}}$ by $t'$, where $t'=(1+p)g_1$, and where the order of $g_1$ (resp.\ $1+p$) is equal to $p-1$ (resp.\ $p^{r-1}$)
in the multiplicative group $\Z_{p^r}^{\ast}$. If $d_{1}$ is an odd divisor of $p^{r-1}(p-1)$, let $q=p^{r-1}(p-1)/d_1$, and consider the subgroup $\Z_{p^r}\rtimes_{\theta'} \Z_{d_1}$ of $G$, where $\Z_{d_1}$ is the subgroup of $\Z_{p^{r-1}(p-1)}$ of order $d_{1}$, and $\map{\theta'}{\Z_{d_1}}[\aut{\Z_{p^r}}]$ is the restriction of $\theta$ to $\Z_{d_1}$. Then $\theta'(1_{d})=\theta(q_{p^{r-1}(p-1)})$ is multiplication by $t=t'^{q}$ in $\Z_{p^{r}}$,
and by injectivity, $t$ is of order $d_{1}$ in $\Z_{p^{r}}^{\ast}$. If further $d_{1}$ is divisible by $p$ then $t^{d_{1}/p}$ is of order $p$ in $\Z_{p^{r}}^{\ast}$, and by \cite[p.~146, line~12]{Za}, $t^{d_{1}/p}\equiv (1+p)^{\lambda {p^{r-2}}}$ mod $p^r$, where $\lambda\in \N$ and $\gcd(\lambda, p)=1$. It follows that $t^{d_{1}/p}-1$ is divisible by $p$, and since $0<d_{1}/p<d_{1}$, the hypotheses of \reth{main} are not satisfied for the group $\Z_{p^r}\rtimes_{\theta'} \Z_{d_1}$. As Example~\ref{ex:groups27}(\ref{it:exama}) below shows, if $p=3$ and $r=2$, such a group may nevertheless embed in $B_{p^r}/\Gamma_2(P_{p^r})$. In Example~\ref{ex:groups27}(\ref{it:examb}), we show that the other non-Abelian group of order $27$, which is of the form $(\Z_{p}\oplus \Z_{p})\rtimes \Z_{p}$, also embeds in $B_9/\Gamma_2(P_9)$. It does not satisfy the hypotheses of \reth{main} either.


\begin{exms}\mbox{}\label{ex:groups27}
Suppose that $G$ is a group of order $27$ that embeds in $\sn[9]$. If $k=2$ and $n=9$ in~\reqref{sesgammak}, taking the preimage of $G$ by $\overline{\sigma}$ leads to the following short exact sequence:
\begin{equation}\label{eq:sesB9}
1 \to P_{9}/\Gamma_{2}(P_{9}) \to \overline{\sigma}^{-1}(G) \xrightarrow{\overline{\sigma}\left\lvert_{\overline{\sigma}^{-1}(G)}\right.} G \to 1,
\end{equation}
where the rank of the free Abelian group $P_{9}/\Gamma_{2}(P_{9})$ is equal to $36$. As we shall now see, if $G$ is one of the two non-Abelian groups of order $27$, the action of $\sn[9]$ on the basis $\mathcal{B}$ of $P_{9}/\Gamma_{2}(P_{9})$ given by~\reqref{actionSn} restricts to an action of $G$ on $\mathcal{B}$ for which there are two orbits, that of $A_{1,2}$, which contains $9$ elements, and is given by:
\begin{equation}\label{eq:orbA12}
\mathcal{O}=\brak{A_{1,2},A_{8,9},A_{5,6},A_{2,3},A_{7,8},A_{4,5},A_{1,3},A_{7,9},A_{4,6}},
\end{equation}
and that of $A_{5,9}$, which contains the remaining $27$ elements of $\mathcal{B}$. Let $H$ be the subgroup of $P_{9}/\Gamma_{2}(P_{9})$ generated by the orbit of $A_{5,9}$. Then $H\cong \Z^{27}$, and $H$ is not normal in $B_{9}/\Gamma_{2}(P_{9})$, but it is normal in the subgroup $\overline{\sigma}^{-1}(G)$ of $B_{9}/\Gamma_{2}(P_{9})$ 
since the basis $\mathcal{B}\setminus \mathcal{O}$ of $H$ is invariant under the action of $G$.
We thus have an extension: 
\begin{equation}\label{eq:quotH}
1\to H \to \overline{\sigma}^{-1}(G) \stackrel{\pi}{\to} \widetilde{H}\to 1,
\end{equation}
where $\widetilde{H}=\overline{\sigma}^{-1}(G)/H$, and $\map{\pi}{\overline{\sigma}^{-1}(G)}[\widetilde{H}]$ is the canonical projection. Equation~\reqref{sesB9} induces the following short exact sequence:
\begin{equation}\label{eq:sesB9quot}
1 \to (P_{9}/\Gamma_{2}(P_{9}))/H \to \widetilde{H} \stackrel{\widetilde{\sigma}}{\to} G \to 1,
\end{equation}
where $\map{\widetilde{\sigma}}{\widetilde{H}}[G]$ is the surjective homomorphism induced by $\overline{\sigma}$, 
and we also have an extension:
\begin{equation}\label{eq:quotH2}
1\to H \to P_{9}/\Gamma_{2}(P_{9}) \stackrel{\rho}{\to} (P_{9}/\Gamma_{2}(P_{9}))/H \to 1,
\end{equation}
obtained from the canonical projection $\map{\rho}{P_{9}/\Gamma_{2}(P_{9})}[(P_{9}/\Gamma_{2}(P_{9}))/H]$. From the construction of $H$, and using the fact that $P_{9}/\Gamma_{2}(P_{9})$ (resp.\ $H$) is the free Abelian group of rank $36$ (resp.\ $27$) for which $\mathcal{B}$ (resp.\ $\mathcal{B}\setminus \mathcal{O}$) is a basis, the kernel of~\reqref{sesB9quot} is isomorphic to $\Z^{9}$ and a basis is given by the $H$-cosets of the nine elements of $\mathcal{O}$. 
Further, the restriction of $\rho$ to the subgroup of $P_{9}/\Gamma_{2}(P_{9})$ generated by $\mathcal{O}$ is an isomorphism, and thus the set $\rho(\mathcal{O})$ is a basis of $(P_{9}/\Gamma_{2}(P_{9}))/H$, which we denote by $\mathcal{O}'$. In the examples below, we shall construct an explicit embedding $\map{\iota}{G}[\widetilde{H}]$ of $G$ in $\widetilde{H}$. This being the case, using~\reqref{quotH}, we thus obtain the following short exact sequence: 
\begin{equation}\label{eq:quotiotaG}
1\to H \to \pi^{-1}(\iota(G)) \xrightarrow{\pi\left\lvert_{\pi^{-1}(\iota(G))}\right.} \iota(G)\to 1.
\end{equation}
Now $H$ is isomorphic to $\Z[G]$, and the action of $\iota(G)$  on $H$ is given via~\reqref{actionSn}.
It follows from \repr{null} that~\reqref{quotiotaG} splits. Hence $G$ embeds in $\pi^{-1}(\iota(G))$, which is a subgroup of $\overline{\sigma}^{-1}(G)$, and so it embeds in $B_{9}/\Gamma_{2}(P_{9})$. We could have attempted to embed $G$ in $B_{9}/\Gamma_{2}(P_{9})$ directly via~\reqref{sesB9}. However one of the difficulties with this approach is that the rank of the kernel is $36$, whereas that of the kernel of~\reqref{sesB9quot} is much smaller, and this decreases greatly the number of calculations needed to show that $G$ embeds in $\widetilde{H}$. We now give the details of the computations of this embedding in the two cases.

\begin{enumerate}
\item\label{it:exama}
Consider the semi-direct product $\Z_{9}\rtimes_{\theta'} \Z_3$, where $\map{\theta'}{\Z_{3}}[\aut{\Z_{9}}]$ is as defined in the second paragraph of this subsection. We have $g_{1}=8$, so $\theta'(1_3)$ is multiplication by $(1+3)\ldotp (8)=32\equiv 5 \bmod{9}$. 
 It will be more convenient for us to work with a different automorphism, but that gives rise to a semi-direct product that is isomorphic to $\Z_{9}\rtimes_{\theta'} \Z_3$ as follows. Let $\map{\phi}{\Z_9}$ be the automorphism given by multiplication by $4$, let $\map{\theta}{\Z_{3}}[\aut{\Z_{9}}]$ be such that $\theta(1_3)=\phi$, and let $G=\Z_{9}\rtimes_{\theta} \Z_3$. Then 
$\theta'(1_{3})=\phi \circ \theta(1_{3})$, which implies that the groups $G$ and $\Z_{9}\rtimes_{\theta'} \Z_3$ are isomorphic, see~\cite[Chap.~1.2,  Proposition~12]{AB}.
It is clear that $\theta$ is injective, but that the hypothesis of \reth{main} that $\gcd(t^l-1,n)=1$ for all $1\leq l<d$ is not satisfied if $l=1$. Nevertheless, the group $G$ embeds in $B_9/\Gamma_2(P_9)$. To see this, 
consider the elements $\alpha=(1,2,3)(4,5,6)(7)(8)(9)$ and $\beta=(1,4,7,3,5,8,2,6,9)$ of $\sn[9]$. Then: 
\begin{align*}
\alpha \beta\alpha^{-1}&=(1,2,3)(4,5,6)(7)(8)(9) \ldotp (1,4,7,3,5,8,2,6,9)\ldotp ((1,2,3)(4,5,6)(7)(8)(9))^{-1}\\
 &=(1,5,9,3,6,7,2,4,8)=(1,4,7,3,5,8,2,6,9)^4=\beta^{4},
\end{align*}
and an embedding of $G$ in $\sn[9]$ is realised by sending $1_{3}$ (resp.\ $1_{9}$) to $\alpha$ (resp.\ $\beta$). From now on, we identify $G$ with its image in $\sn[9]$ under this embedding.
One may check that $\mathcal{O}$ and $\mathcal{B}\setminus \mathcal{O}$ are the two orbits arising from the 
action of $G$ on $\mathcal{B}$ given by~\reqref{actionSn} (for future reference, note that the order of the elements of $\mathcal{O}$ is that obtained by the action of successive powers of $\beta$).
We define the map $\map{\iota}{G}[\widetilde{H}]$ on the generators of $G$ by $\iota(\alpha)= \widehat{\alpha}$ and $\iota(\beta)=\widehat{\beta}$, where $\widehat{\alpha}=\sigma_2 \sigma_1^{-1} \sigma_5\sigma_4^{-1}$ and $\widehat{\beta}=A_{1,2}A_{8,9}^{-1} w \sigma_8\sigma_7 \sigma_6 \sigma_5 \sigma_4^{-1}\sigma_3^{-1} \sigma_2^{-1} \sigma_1^{-1} w^{-1}$, and where $w=\sigma_3 \sigma_6 \sigma_2 \sigma_3 \sigma_4\sigma_5\sigma_4 \sigma_3 \sigma_7\sigma_6$. 
By abuse of notation, we will denote an element of $B_9/\Gamma_2(P_9)$ in the same way as its projection on the quotient  $(B_{9}/\Gamma_{2}(P_{9}))/H$.
We claim that $\widehat{\alpha} \widehat{\beta} \widehat{\alpha}^{-1} \widehat{\beta}^{-4}=1$ in $\widetilde{H}$, from which we may conclude that $\iota$ extends to a group homomorphism. 
To prove the claim, using~\cite[Proposition~12]{GGO1} and the action of $\beta$ on the orbit of $A_{1,2}$ mentioned above, first note that:
\begin{align}
\widehat{\alpha} \widehat{\beta} \widehat{\alpha}^{-1} \widehat{\beta}^{-4}&= \widehat{\alpha} A_{1,2}A_{8,9}^{-1} b \widehat{\alpha}^{-1} (A_{1,2}A_{8,9}^{-1}b)^{-4}\notag\\
&=A_{1,3}A_{8,9}^{-1} \widehat{\alpha} b \widehat{\alpha}^{-1} b^{-4}\ldotp b^{3} A_{1,2}^{-1}A_{8,9} b^{-3} \ldotp b^{2} A_{1,2}^{-1}A_{8,9} b^{-2} \ldotp b A_{1,2}^{-1}A_{8,9} b^{-1} \ldotp A_{1,2}^{-1}A_{8,9}\notag\\
&= A_{1,3}A_{8,9}^{-1} \ldotp \widehat{\alpha} b \widehat{\alpha}^{-1} b^{-4} \ldotp A_{2,3}^{-1} A_{7,8} \ldotp A_{5,6}^{-1} A_{2,3} \ldotp A_{8,9}^{-1} A_{5,6} \ldotp A_{1,2}^{-1}A_{8,9}\notag\\
&=A_{1,2}^{-1} A_{7,8} A_{1,3}A_{8,9}^{-1} \widehat{\alpha} b \widehat{\alpha}^{-1} b^{-4}.\label{eq:relalbe}
\end{align}
To obtain the final equality, we have also used the fact that $\widehat{\alpha} b \widehat{\alpha}^{-1} b^{-4}$ belongs to the quotient $(P_{9}/\Gamma_{2}(P_{9}))/H$, so commutes with the other terms in the expression. To compute $\widehat{\alpha} b \widehat{\alpha}^{-1} b^{-4}$ in terms of the basis $\mathcal{O}'$ of $(P_9/\Gamma_2(P_9))/H$, we use the method of crossing numbers given in~\cite[Proposition~15]{GGO1}, except that since we are working in $(P_9/\Gamma_2(P_9))/H$, using the isomorphism $\map{\rho\left\lvert_{\ang{\mathcal{O}}}\right.}{\ang{\mathcal{O}}}[(P_{9}/\Gamma_{2}(P_{9}))/H]$ induced by~\reqref{quotH2}, it suffices to compute the crossing numbers of the pairs of strings corresponding to the elements of $\mathcal{O}$ given in~\reqref{orbA12}. Using the braid $\widehat{\alpha} b \widehat{\alpha}^{-1} b^{-4}$ illustrated in Figure~\ref{fig:fig3}, one may verify that:
\begin{equation}\label{eq:relsd27}
\widehat{\alpha} b \widehat{\alpha}^{-1}b^{-4}=A_{1,2}A_{1,3}^{-1}A_{7,8}^{-1}A_{8,9}
\end{equation}
in the quotient $(P_9/\Gamma_2(P_9))/H$.
\begin{figure}\enlargethispage{1cm}\vspace*{-10mm}
  \centering
  \begin{minipage}[b]{.48\linewidth}
    \centering
      {\begin{tikzpicture}
\braid[
height=14.4pt, 
width=22pt, 
border height=1pt, 
line width=1.5pt,
style strands={1}{red},
style strands={2}{orange},
style strands={3}{DarkKhaki}, 
style strands={4}{green}, 
style strands={5}{blue}, 
style strands={7}{Fuchsia},
style strands={8}{violet},
style strands={9}{DarkBlue}
] 
s_{2}-s_{5} 
s_1^{-1}-s_4^{-1}-s_{6} 
s_{3}-s_{7} 
s_{2}-s_{8} 
s_{3} 
s_{4} 
s_{5} 
s_{4}-s_{6} 
s_{3}-s_{7} 
s_{6} 
s_{5} 
s_4^{-1}-s_6^{-1} 
s_{3}^{-1}-s_7^{-1} 
s_2^{-1} 
s_1^{-1}-s_3^{-1} 
s_4^{-1} 
s_5^{-1} 
s_4^{-1}-s_6^{-1} 
s_3^{-1} 
s_2^{-1} 
s_{1}-s_{3}^{-1} 
s_{2}^{-1}-s_{4}
s_{3}-s_5^{-1} 
s_{2}-s_{6} 
s_{1}-s_{3}-s_{7}
s_{4} 
s_{5} 
s_{4}-s_{6} 
s_{3}
s_{2} 
s_{1}-s_{3} 
s_{2}-s_{4} 
s_{1}-s_{3}-s_{5}^{-1} 
s_{2}-s_{4}-s_{6}^{-1} 
s_{1}-s_{3}-s_{5}^{-1}-s_{7}^{-1} 
s_{2}-s_{4}-s_{6}^{-1}-s_{8}^{-1} 
s_{3}-s_{5}^{-1}-s_{7}^{-1} 
s_{4}-s_{6}^{-1}-s_{8}^{-1} 
s_{3}^{-1}-s_{5}^{-1}-s_{7}^{-1} 
s_{4}^{-1}-s_{6}^{-1}-s_{8}^{-1} 
s_{7}^{-1} 
s_{6}^{-1}-s_{8}^{-1} 
s_{5}^{-1}-s_{7}^{-1} 
s_{4}^{-1}-s_{6}^{-1} 
s_{3}^{-1} 
s_{2}^{-1}
s_{3}^{-1}; 
\end{tikzpicture}}
\caption{The braid $\widehat{\alpha} b \widehat{\alpha}^{-1} b^{-4}$}\label{fig:fig3}
  \end{minipage}\quad
  \begin{minipage}[b]{.48\linewidth}
    \centering
    \subcaptionbox{The braid $[\widehat{\alpha}, \widehat{\beta}] \ldotp \widehat{\gamma}^{-1}$\label{fig:fig4a}}
      {\begin{tikzpicture}
\braid[
height=12pt, 
width=22pt, 
border height=1pt, 
line width=1.5pt,
style strands={1}{red},
style strands={2}{orange},
style strands={3}{DarkKhaki}, 
style strands={4}{green}, 
style strands={5}{blue}, 
style strands={7}{Fuchsia},
style strands={8}{violet},
style strands={9}{DarkBlue}
]
s_{3}-s_{6} s_{2}-s_{4}-s_{7} s_{5}-s_{8} s_{4}-s_{6} s_{3}-s_{5}-s_{7}^{-1} s_{2}-s_{4}^{-1}-s_{6}^{-1} s_{1}-s_{3}^{-1}-s_{7}^{-1} s_{2}^{-1}-s_{4}^{-1} s_{5}^{-1} s_{4}^{-1}-s_{6}^{-1} s_{3}^{-1}-s_{5}-s_{7} s_{4}^{-1}-s_{6}-s_{8}^{-1} s_{3}^{-1}-s_{7} s_{2}-s_{4} s_{1}-s_{5} s_{4}-s_{6} s_{3}-s_{7} s_{2}^{-1}-s_{4}-s_{8}^{-1} s_{3}^{-1}-s_{5}^{-1} s_{2}^{-1}-s_{4}^{-1}-s_{6}^{-1} s_{1}-s_{5}^{-1}-s_{7}^{-1} s_{4}^{-1}-s_{6}^{-1}-s_{8} s_{3}^{-1}-s_{7}^{-1} s_{2}^{-1}-s_{4}-s_{7} s_{5}^{-1}-s_{8}^{-1} s_{4} s_{5}^{-1}
;
\end{tikzpicture}}\vspace*{3mm}

\subcaptionbox{The braid $[\widehat{\alpha}, \widehat{\gamma}]$\label{fig:fig4b}}
      {\begin{tikzpicture}
\braid[
height=12pt, 
width=22pt, 
border height=1pt, 
line width=1.5pt,
style strands={1}{red},
style strands={2}{orange},
style strands={3}{DarkKhaki}, 
style strands={4}{green}, 
style strands={5}{blue}, 
style strands={7}{Fuchsia},
style strands={8}{violet},
style strands={9}{DarkBlue}
] 
s_{3}-s_{6} s_{2}-s_{4}-s_{7} s_{5}-s_{8} s_{4}-s_{6} s_{3}-s_{5}-s_{7}^{-1} s_{2}-s_{4}^{-1}-s_{6}^{-1} s_{1}^{-1}-s_{3}^{-1}-s_{7}^{-1} s_{2}^{-1}-s_{4}^{-1}-s_{8} s_{5}^{-1} s_{4}^{-1}-s_{6}^{-1} s_{3}^{-1}-s_{5}-s_{7}^{-1} s_{2}-s_{4}^{-1}-s_{6} s_{1}^{-1}-s_{3}-s_{7} s_{2}-s_{4} s_{1}-s_{5} s_{4}-s_{6} s_{3}-s_{7} s_{2}^{-1}-s_{4}-s_{8}^{-1} s_{3}^{-1}-s_{5}^{-1} s_{2}^{-1}-s_{4}^{-1}-s_{6}^{-1} s_{1}-s_{5}^{-1}-s_{7}^{-1} s_{4}^{-1}-s_{6}^{-1} s_{3}^{-1}-s_{7} s_{2}^{-1}-s_{4}-s_{8}^{-1} s_{5}^{-1}
;
\end{tikzpicture}}

    \caption{The braids $[\widehat{\alpha}, \widehat{\beta}] \ldotp \widehat{\gamma}^{-1}$ and $[\widehat{\alpha}, \widehat{\gamma}]$}\label{fig:fig4}
  \end{minipage}
\end{figure}
%
It follows from equations~\reqref{relalbe} and~\reqref{relsd27} that $\widehat{\alpha} \widehat{\beta} \widehat{\alpha}^{-1} \widehat{\beta}^{-4}=1$ in $\widetilde{H}$, which proves the claim. Thus $\langle\, \widehat{\alpha}, \widehat{\beta} \,\rangle$ is a quotient of $G$, but since it is non Abelian, and the only non-Abelian quotient of $G$ is itself, we conclude that $\langle\, \widehat{\alpha}, \widehat{\beta} \,\rangle \cong G$, and hence $\iota$ is an embedding. It follows from the discussion at the beginning of these examples that $G$ embeds in $\pi^{-1}(\iota(G))$, and therefore in $B_9/\Gamma_2(P_9)$.

\item\label{it:examb} We
now give an explicit example of a non-Abelian group $G$ of the form $A\rtimes \Z_{m}$ that embeds in $B_{\ord{A}}/\Gamma_2(P_{\ord{A}})$, where $A$ is a non-cyclic finite Abelian group. To our knowledge, this is the first explicit example of a finite group that embeds in such a quotient but that is not a semi-direct product of two cyclic groups. In particular, this subgroup does not satisfy the hypothesis of \reth{main} either. 
Let
$G$ be the Heisenberg group $\bmod{p}$ of order $p^{3}$, where $p$ is an odd prime.
There exists an extension of the form:
\begin{equation}\label{eq:extHeis}
1\to \Z_p \to G\to \Z_p\oplus\Z_p\to 1,
\end{equation}
and a presentation of $G$ is given by:
\begin{equation}\label{eq:heis}
\setangr{a,b,c}{\text{$a^p=b^p=c^p=1$, $c=[a,b]$ and $[a,c]=[b,c]=1$}},
\end{equation}
where $c$ is an element of $G$ emanating from a generator of the kernel $\Z_p$ of the extension~\reqref{extHeis}, and $a$ and $b$ are elements of $G$ that project to the generators of the summands of the quotient. This group is also isomorphic to $(\Z_p\oplus \Z_p)\rtimes_{\theta} \Z_p$, where the action $\map{\theta}{\Z_{p}}[\aut{\Z_p\oplus \Z_p}]$ is given by $\theta(1_p)=\left( 
\begin{smallmatrix}
1 & 1 \\
0 & 1
\end{smallmatrix}
\right)$. From now on, we assume that $p=3$.
Consider the map from $G$ to $\sn[9]$ given by sending $a$ to $\alpha=(1,4,7)(2,5,8)(3,6,9)$ and $b$ to $\beta=(1)(2)(3)(4,5,6)(7,9,8)$, so $c$ is sent to $\gamma=[\alpha,\beta]= (1,2,3)(4,5,6)(7,8,9)$. The relations of~\reqref{heis} hold for the elements $\alpha,\beta$ and $\gamma$, and since the only non-Abelian quotient of $G$ is $G$ itself, it follows that this map extends to an embedding of $G$ in $\sn[9]$. 
Once more, one may check that $\mathcal{O}$ and $\mathcal{B}\setminus \mathcal{O}$ are the orbits arising from the action of $G$ on $\mathcal{B}$. 
It remains to show that $G$ embeds in $\widetilde{H}$.
Let $\map{\iota}{G}[\widetilde{H}]$ be the map defined by $\iota(\alpha)=\widehat{\alpha}$, $\iota(\beta)=\widehat{\beta}$ and $\iota(\gamma)=\widehat{\gamma}$, where:
\begin{equation*}
\text{$\widehat{\alpha}=w' \sigma_2 \sigma_1^{-1} \sigma_5 \sigma_4^{-1}\sigma_8 \sigma_7^{-1}w'^{-1}$, $\widehat{\beta}= \sigma_5 \sigma_4^{-1} \sigma_7 \sigma_8^{-1}$ and $\widehat{\gamma}=\sigma_2 \sigma_1^{-1} \sigma_5 \sigma_4^{-1}\sigma_8 \sigma_7^{-1}$,}
\end{equation*}
and where $w'= \sigma_3 \sigma_2 \sigma_4\sigma_6\sigma_5 \sigma_4\sigma_3 \sigma_7\sigma_6$. Using the notation of~\cite[equations~(14) and~(16)]{GGO1}, $\widehat{\alpha}=w' \delta_{0,3}\delta_{3,3} \delta_{6,3} w'^{-1}=w'\delta(0,3,3,3) w'^{-1}$, $\widehat{\beta}=\delta_{3,3} \delta_{6,3}^{-1}$ and $\widehat{\gamma}=w' \widehat{\alpha} w'^{-1}$. So these three elements are of order $3$ in $B_{9}/\Gamma_{2}(P_{9})$ by the argument of~\cite[line~4, p.~412]{GGO1}, and hence they satisfy the first three relations of~\reqref{heis} in $\widetilde{H}$, $a$, $b$ and $c$ being taken to be $\widehat{\alpha}, \widehat{\beta}$ and $\widehat{\gamma}$ respectively.
One may check in a straightforward manner that $[\widehat{\beta}, \widehat{\gamma}]=1$ as elements of $B_{9}$, hence $[\widehat{\beta}, \widehat{\gamma}]=1$ in $\widetilde{H}$. To see that the two remaining relations of~\reqref{heis} hold, as in the first example, we use the method of crossing numbers of the strings given in~\cite[Proposition~15]{GGO1}, but in $\widetilde{H}$ rather than $P_9/\Gamma_2(P_9)$. In this way, we see from Figures~(\ref{fig:fig4})(\subref{fig:fig4a}) and~(\subref{fig:fig4b}) that 
$[\widehat{\alpha}, \widehat{\beta}] \ldotp \widehat{\gamma}^{-1}=1$ and $[\widehat{\alpha}, \widehat{\gamma}]=1$
in $\widetilde{H}$. It thus follows that $\langle\, \widehat{\alpha}, \widehat{\beta}, \widehat{\gamma} \,\rangle$ is a quotient of $G$, but since this subgroup is non-Abelian, and the only non-Abelian quotient of $G$ is itself, we conclude that $\langle\, \widehat{\alpha}, \widehat{\beta} \,\rangle \cong G$, and hence $\iota$ is an embedding. Once more, it follows from the discussion at the beginning of these examples that $G$ embeds in $\pi^{-1}(\iota(G))$, and therefore in $B_9/\Gamma_2(P_9)$.
\end{enumerate}
\end{exms}
 
\begin{rems}\mbox{}
\begin{enumerate}
\item Let $G$ be one of the two groups of order $27$ analysed in \rexs{groups27}. With the notation introduced at the beginning of \resec{semidirect}, the fact that $G$ embeds in $B_{9}/\Gamma_{2}(P_{9})$ implies that $\ell_2(G)\leq 9$. On the other hand, if $G$ embeds in $\sn[r]$ then $r\geq 9$ by Lagrange's Theorem. Hence $m(G)\geq 9$, and it follows from~\reqref{ineqmG} that $m(G)=\ell_2(G)=9$, which is coherent with~\cite[Corollary~13]{MaV} in the case $k=2$.

\item The finite groups of the form $A\rtimes_{\theta} H$, where $A$ is a finite Abelian group, $H$ is an arbitrary finite group, and $\map{\theta}{H}[\aut{A}]$ is injective, embed in $S_{A}$ by \relem{Ginjphi}.
From~\cite[Corollary~13]{MaV}, if the order of $A\rtimes_{\theta} H$ is odd then it  
 embeds in $B_{\ord{A}}/\Gamma_2(P_{\ord{A}})$. 
 We would like to be able to determine which of these groups 
 embed in $B_{\ord{A}}/\Gamma_3(P_{\ord{A}})$.
\end{enumerate}
\end{rems}
 

\end{document}